\documentclass{article}
\usepackage{amssymb}
\usepackage{amsmath}
\usepackage{amsthm}
\usepackage[bookmarks]{hyperref}
\usepackage{tikz}
\usetikzlibrary{positioning}

\tikzset{>=stealth}

\author{Jonathan Breuer and Eyal Seelig \thanks{Institute of Mathematics, The Hebrew University of Jerusalem, Jerusalem, 91904, Israel.
Emails: jbreuer@math.huji.ac.il, eyal.seelig@mail.huji.ac.il}}
\title{Spectral Gaps for Jacobi Matrices on Graphs}

\numberwithin{equation}{section}

\theoremstyle{definition}

\theoremstyle{remark}
\newtheorem*{rem}{Remark}

\theoremstyle{plain}

\newtheorem{thm}{Theorem}[section]
\newtheorem{conjecture}{Conjecture}[section]

\newtheorem{prop}[thm]{Proposition}
\newtheorem{lem}[thm]{Lemma}
\newtheorem{cor}[thm]{Corollary}

\newcommand{\norm}[1]{\left\Vert #1\right\Vert }
\newcommand{\C}{{\mathbb{C}}}
\newcommand{\R}{{\mathbb{R}}}


\date{}
\begin{document}
\maketitle


\begin{center}{\it In celebration of the 80th birthday of Dany Leviatan}
\end{center}

\begin{abstract}
We study bounds on eigenvalue gaps for finite quotients of periodic Jacobi matrices on trees. We prove an Alon-Boppana type bound for the spectral gap and a comparison result for other eigenvalue gaps.
\end{abstract}

\section{Introduction}

Let $\mathcal G$ be a finite connected graph with vertex set $V$ and edge set $E$. We consider in this paper undirected edges and allow multiple edges between vertices, as well as loops. We do not allow vertices of degree $1$ (`leaves'). For two vertices, $v, w \in V$, we write $v \sim w$ (and refer to them as \emph{neighbors}) if there is an edge, $e \in E$, that connects them. In cases when there is a single edge between the vertices $v$ and $w$, we will denote that edge by $(v,w)$.

The spectral gap of $\mathcal G$, namely the gap between the highest and second highest eigenvalue of its adjacency matrix, has been a subject of considerable interest for several decades (\cite{bilu-linial,ceccherini,chung,cioaba,friedman,greenberg,gz,hoory,hlw,jiang,mss,mohar,Alon,srivastava-trevisan,wang-zhang,young} is a very partial list). The size of the spectral gap is of particular relevance in the study of networks and is known to be intimately connected to the geometry of the graph. Especially important is the study of graphs with a large spectral gap, known as expanders \cite{hlw}, where an important approach consists of studying graph covers (see, e.g., \cite{bilu-linial,bordenave-collins,mss}).

Perhaps the most fundamental result in this area is the Alon-Boppana bound \cite{Alon} (see also \cite{friedman}) bounding from above the size of the spectral gap. Originally formulated for regular graphs, it says that if $\lambda_2$ is the second highest eigenvalue of the adjacency matrix of a $d$-regular graph, $\mathcal G$, with diameter at least $2r$, then
\begin{equation} \label{eq:AB}
\lambda_2 \geq 2\sqrt{d-1}-\frac{2\sqrt{d-1}-1}{r}.
\end{equation}
Notably, $2\sqrt{d-1}$ is the spectral radius of the $d$-regular tree, which is the universal cover of any $d$-regular graph. Various generalizations and extensions of this result are discussed below. In this paper we want to discuss bounds of this type for a family of operators generalizing graph adjacency matrices, known as graph Jacobi matrices.

A \emph{Jacobi matrix} on a graph is an operator associated with a set of positive numbers, $\{a_e\}_{e \in E}$, assigned to the edges, and a set of real numbers, $\{b_v\}_{v \in V}$, assigned to the vertices (we refer to the set $\{b_v\}$ as `the potential'). Its matrix in the basis of vertex delta functions is given by the following formula
\begin{equation} \nonumber
J_{v,w}=\left\{ \begin{array}{cc} b_v+2\sum_{\textrm{loops at } v} a_e &\quad \textrm{ if } v=w \textrm{ and the sum is over loops at } v \\
\sum_{e \textrm{ between } v \textrm{ and } w} a_e &\quad \textrm{ if } v\sim w \textrm{ and the sum is over all edges between them}\\
0 & \quad \textrm{if } v\neq w \textrm{ are not neighbors}  \\
\end{array}
\right.
\end{equation}

The universal cover of a finite, leafless, (regular) graph is an infinite (regular) tree. The lift of a finite graph Jacobi matrix to the graph's universal cover is known as a \emph{periodic Jacobi matrix on a tree} \cite{abs}. Such objects have attracted quite a bit of attention in recent years \cite{aomoto1,aomoto2,abs,abks,bbgvss,bgvm,figa-talamanca-steger,vargas,klw1,klw2,sunada,woess}.

While most of the work in this context has concentrated on the spectral properties of the periodic lifts, properties of \emph{finite} lifts of a graph Jacobi matrix (and their connection to the universal cover) have also received some attention \cite{abks,bgvm,csz}. We note, in particular, \cite{abks}, which studies the convergence of the normalized eigenvalue counting measure along towers of finite lifts. The current paper is written in this spirit. Its main purpose is to prove lower bounds of the type \eqref{eq:AB} for finite quotients of periodic Jacobi matrices on trees.

As remarked above, extensions of the Alon-Boppana bound have been considered in several contexts. Greenberg \cite{greenberg} studied adjacency matrices on non-regular graphs. He showed that if $\mathcal{G}_n$ is a sequence of finite graphs with a common universal cover, $T$, such that the number of vertices, $|V(\mathcal{G}_n)| \to \infty$, then $\liminf\lambda_2 (\mathcal{G}_n) \geq \rho(T)$ where $\rho(T)$ is the spectral radius of $T$ (see also \cite{cioaba,gz,mohar} for related results and \cite{ceccherini} for an extension to graphs with edge weights).

A different bound for the case of non regular graphs (with no edge weights) was given by Hoory, using the average degree of the graph \cite{hoory}. In `robust' cases, Hoory's bound also involves a bound on the rate of convergence. The papers \cite{chung,jiang,wang-zhang,young} contain various extensions of Hoory's result, allowing edge weights, and improving both the lower bound and the convergence rate. Srivastava-Trevisan in \cite{srivastava-trevisan} also allow for general edge weights, in the slightly different context of proving concentration of the spectrum of the Laplacian. Recently, Bordenave-Collins \cite{bordenave-collins} obtained an Alon-Boppana type lower bound for the operator norm of non-commutative polynomials in permutation matrices with coefficients from a $C^*$ algebra (which generalize adjacency matrices on graph covers).

We note that none of the results mentioned above, except for \cite{bordenave-collins}, allow for a vertex potential.

\smallskip

Given a Jacobi matrix, $J_0$ on a finite graph, $\mathcal G_0$, our aim in this paper is to formulate lower bounds on the second eigenvalue of lifts of $J_0$ to finite coverings of $\mathcal G_0$.
We start with noting that Greenberg's proof extends almost verbatim to the general case of Jacobi matrices. Thus

\begin{thm}\label{thm-greenberg-jacobi}
	Let $J_0$ be a Jacobi matrix
	on a finite graph $\mathcal G_0$.
	Let $\mathcal{G}_n$ be a sequence of finite covers of $\mathcal{G}_0$
	such that $|V(\mathcal{G}_n)| \to \infty$,
	and let $J_n$ be
	the lift of $J_0$ to $\mathcal{G}_n$.
	Let $J_T$ be the lift of $J_0$ to the
	universal cover $T$ and let $\sigma(J_T)$ denote its spectrum.
	Then
	\begin{equation*}
		\liminf_{n\to\infty}
		\lambda_2(J_n)
		\geq
		\sup\sigma(J_T).
	\end{equation*}
\end{thm}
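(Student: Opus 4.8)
The plan is to mimic Greenberg's original argument, which is a test-function (Rayleigh quotient) argument adapted to the covering-space setting. The key fact to exploit is that each finite cover $\mathcal{G}_n$ is locally isomorphic to the universal cover $T$: a ball of radius $r$ around a vertex in $\mathcal{G}_n$ is isometric (as a rooted, edge-weighted, potential-carrying graph) to the corresponding ball in $T$, provided $r$ is smaller than half the girth (more precisely, smaller than the injectivity radius) of $\mathcal{G}_n$. Since $|V(\mathcal{G}_n)|\to\infty$ and the $\mathcal{G}_n$ are covers of a fixed finite graph, one should first verify that the injectivity radius grows, or at least that there exist vertices of arbitrarily large injectivity radius — this is the mechanism that lets local spectral data of $T$ leak into the finite operators $J_n$.

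The core of the argument is a variational construction. Fix $\mu<\sup\sigma(J_T)$; I want to show $\lambda_2(J_n)\geq\mu$ for all large $n$. Because $\mu$ is below the top of the spectrum of the self-adjoint operator $J_T$, there is an approximate eigenvector: a finitely supported $\psi$ on $T$ with $\langle\psi,J_T\psi\rangle\geq(\mu-\eps)\norm{\psi}^2$. Let $R$ be the radius of its support. Now, using the growth of the injectivity radius, pick two vertices $x_1,x_2$ in $\mathcal{G}_n$ whose $R$-balls are disjoint and each isometric to the $R$-ball in $T$; transplant copies $\psi_1,\psi_2$ of $\psi$ onto these two balls. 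The disjointness of supports gives an orthogonal pair, hence a two-dimensional test space, and the local isometry guarantees $\langle\psi_i,J_n\psi_i\rangle\geq(\mu-\eps)\norm{\psi_i}^2$ because the quadratic form of a Jacobi matrix is local — it only sees the edge weights $a_e$ and potential values $b_v$ within distance one of the support. By the min-max principle, two orthogonal vectors each with Rayleigh quotient at least $\mu-\eps$ force $\lambda_2(J_n)\geq\mu-\eps$. Letting $n\to\infty$ and then $\eps\to0$, $\mu\to\sup\sigma(J_T)$ yields the claim.

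The one genuine subtlety, and where I expect to spend the most care, is guaranteeing the existence of \emph{two} disjoint, deep balls rather than just one. Producing a single vertex of large injectivity radius is a standard counting/pigeonhole consequence of $|V(\mathcal{G}_n)|\to\infty$ together with bounded degree; obtaining two well-separated such vertices requires a little more. If the injectivity radius itself tends to infinity this is automatic for large $n$. If it does not, one must argue that the number of vertices lying in short cycles (or within bounded distance of them) is controlled, so that the bulk of the (growing) vertex set still consists of deep vertices, from which two far-apart ones can be selected. This is exactly the point at which $|V(\mathcal{G}_n)|\to\infty$ is indispensable, and handling the non-regular, weighted, potential-carrying setting here — as opposed to Greenberg's plain adjacency matrices — is the reason the theorem needs its own (if short) proof rather than a bare citation. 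The adaptation to Jacobi matrices is otherwise transparent, since the locality of the quadratic form and the min-max principle are insensitive to the presence of weights and a potential.
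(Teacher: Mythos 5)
Your variational outline (approximate eigenvector on $T$, two far-separated copies, min--max) is sound as a template, but the transfer mechanism you rely on---local isometry between balls in $\mathcal{G}_n$ and balls in $T$---breaks down, and this is a genuine gap rather than a technicality. Finite covers of a fixed graph need not contain \emph{any} vertex of large injectivity radius, no matter how many vertices they have. Concretely, let $\mathcal{G}_0$ be a single vertex with two loops (so $T$ is the $4$-regular tree), and let $\mathcal{G}_n$ be the connected $n$-fold cover in which one loop lifts to an $n$-cycle and the other lifts trivially: $\mathcal{G}_n$ is an $n$-cycle with a loop attached at every vertex. Then $|V(\mathcal{G}_n)|=n\to\infty$, yet every vertex lies on a cycle of length one, so no ball of radius $\geq 1$ in $\mathcal{G}_n$ is isometric to a ball in $T$. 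The ``deep'' vertices whose abundance your argument requires simply do not exist, and the set of vertices lying on or near short cycles can be all of $V(\mathcal{G}_n)$. Even your weaker claim, that bounded degree plus $|V(\mathcal{G}_n)|\to\infty$ yields a single vertex of large injectivity radius by pigeonhole, is false: what that hypothesis gives is large \emph{diameter}, not large girth.

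The correct replacement---and the mechanism the paper actually uses---is a one-sided comparison that needs only positivity, not local isometry: after adding $mI$ so that all entries of $J_0$, $J_n$, $J_T$ are non-negative, the covering map injects closed walks of $T$ into closed walks of $\mathcal{G}_n$, hence $\langle \delta_u, J_T^\ell \delta_u\rangle \leq \langle \delta_{q(u)}, J_n^\ell \delta_{q(u)}\rangle$ for every $u\in V(T)$. The paper's proof of Theorem \ref{thm-greenberg-jacobi} feeds this into a moment argument: assuming $\lambda_2(J_{n_k})\leq \sup\sigma(J_T)-\varepsilon$ along a subsequence, the normalized eigenvalue counting measures have a weak limit $\nu$ (Helly--Bray) supported in $[0,\sup\sigma(J_T)-\varepsilon']$, so $\limsup_{\ell\to\infty} \left(\int x^\ell \, d\nu\right)^{1/\ell}\leq \sup\sigma(J_T)-\varepsilon'$ by Lemma \ref{lem-measuresupport}; but the walk-counting inequality bounds the normalized traces $\mathrm{tr}(J_{n_k}^\ell)$, i.e.\ the moments of $\nu_{n_k}$, from below by $|V(\mathcal{G}_0)|^{-1}\langle\delta_u, J_T^\ell\delta_u\rangle$ for a fixed $u$, and the $\ell$-th roots of the latter tend to $\sup\sigma(J_T)$---a contradiction. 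If you prefer to keep your two-test-function scheme, it can be repaired exactly as in the paper's proof of Theorem \ref{thm-alon-boppana-jacobi1}: do not ask for tree-like balls; instead restrict $J_n$ to two far-apart balls (zeroing the edges that leave them), observe that $\lambda_1\left(J_n\vert_{B(e_i,r)}\right)\geq \lambda_1\left(J_T\vert_{B(\tilde{e}_i,r)}\right)$ by the same walk-counting comparison, and use the extensions by zero of the Perron--Frobenius eigenvectors of the two restrictions as your orthogonal test pair.
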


For the sake of completeness, the proof is given in Appendix \ref{GreenbergProof}.
Note that the proof actually implies the following: let $(\mathcal{G}_n,J_n)$ be a sequence of finite lifts of $(\mathcal G_0,J_0)$ and let $\nu_n$ be the normalized eigenvalue counting measure of $J_n$. Assume that $|V(\mathcal{G}_n)|\rightarrow \infty$ and that $\nu_n$ converges to a measure $\nu_\infty$. Then
\begin{equation} \label{eq:Greenberg}
\sup \textrm{supp} \nu_\infty \geq \sup \sigma(J_T).
\end{equation}
This of course implies Theorem \ref{thm-greenberg-jacobi}, in fact with $\lambda_2$ replaced by $\lambda_m$ for any $m$.
Note that, by \cite{abks}, there is a limiting eigenvalue counting measure whose support is $\sigma(J_T)$ (i.e.\ the \emph{density of states} of $J_T$). Thus, $\sigma(J_T)$ has the smallest supremum out of all supports for limiting eigenvalue counting measures. A natural question is whether this is true in terms of containment of sets. Clearly, if the covers are not connected it is easy to construct counterexamples (if $\mathcal{G}_n$ is a union of $n$ copies of $\mathcal{G}_0$ then $\nu_\infty$ is supported on a discrete set, whereas $\sigma(J_T)$ is not discrete). We conjecture that this is the only reason this could fail.
\begin{conjecture} \label{conj:GeneralizedAB}
Let $J_0$ be a Jacobi matrix on a finite connected graph $\mathcal G_0$. Let $\mathcal{G}_n$ be a sequence of finite connected covers of $\mathcal G_0$ such that $|V(\mathcal{G}_n)| \to \infty$, and let $J_n$ be the lift of $J_0$ to $\mathcal{G}_n$. Let $J_T$ be the lift of $J$ to the universal cover $T$.

Let $\nu_n$ be the normalized eigenvalue counting measure of $J_n$ and assume that the weak limit $\nu_\infty=\lim_{n \to \infty}\nu_n$ exists. Then
\begin{equation} \nonumber
\sigma(J_T)\subseteq \textrm{supp} \nu_\infty.
\end{equation}
\end{conjecture}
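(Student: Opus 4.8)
The plan is to reduce the set-containment to a quantitative local statement and then transplant approximate eigenfunctions from the tree to the covers. Fix $\lambda \in \sigma(J_T)$ and $\eps > 0$; since these are arbitrary, it suffices to prove $\liminf_{n} \nu_n\big((\lambda-\eps,\lambda+\eps)\big) > 0$, i.e.\ that a positive proportion of the eigenvalues of $J_n$ lie within $\eps$ of $\lambda$. Writing $P_n = P_{(\lambda-\eps,\lambda+\eps)}(J_n)$ for the spectral projection, the operator inequality $I - P_n \le \eps^{-2}(J_n-\lambda)^2$ gives, for any orthonormal family $\phi_1,\dots,\phi_N$ in $\ell^2(\G_n)$,
\[
\operatorname{tr} P_n \;\ge\; \sum_{i=1}^N \langle \phi_i, P_n \phi_i\rangle \;\ge\; N - \eps^{-2}\sum_{i=1}^N \norm{(J_n-\lambda)\phi_i}^2 .
\]
So it is enough to produce, in each $\G_n$, an orthonormal family of size $N = c\,|V(\G_n)|$ (with $c>0$ independent of $n$) of quasimodes with $\norm{(J_n-\lambda)\phi_i} \le \eps/2$; then $\operatorname{tr} P_n \ge \tfrac34 N$ and hence $\nu_n((\lambda-\eps,\lambda+\eps)) \ge \tfrac34 c$.

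The quasimodes will come from the tree. Since $\lambda \in \sigma(J_T)$ and finitely supported functions are dense in $\ell^2(T)$, there is a finitely supported $\psi$ on $T$ with $\norm{\psi}=1$ and $\norm{(J_T-\lambda)\psi} \le \eps/2$; let $F=\operatorname{supp}\psi$, a finite subtree of radius $R$. First I would call $v \in V(\G_n)$ \emph{$R$-good} if the covering map $T \to \G_n$ restricts to an isomorphism of the ball $B_{R+1}(\tilde v) \subseteq T$ onto $B_{R+1}(v)\subseteq\G_n$, i.e.\ the injectivity radius at $v$ exceeds $R+1$. For such $v$ one pushes $\psi$ (centred at $v$) forward to $\phi_v$ on $\G_n$; because $J_n$ is local and $B_{R+1}(v)$ is tree-like, $\norm{(J_n-\lambda)\phi_v} = \norm{(J_T-\lambda)\psi} \le \eps/2$. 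A maximal $(2R+2)$-separated subset of the $R$-good vertices yields $\phi_v$ with pairwise disjoint supports, hence an orthonormal family; by bounded degree its size is at least a fixed fraction of the number of $R$-good vertices.

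Thus everything rests on one point: that a \emph{positive proportion} of the vertices of $\G_n$ are $R$-good, uniformly in $n$, for each fixed $R$. Equivalently, the Benjamini--Schramm limit of the $\G_n$ (a unimodular random rooted graph covering $\G_0$, i.e.\ an invariant random subgroup of $\pi_1(\G_0)$) should give positive probability to the root having a tree-like $R$-ball. This is the hard part, and is where connectivity must do its work. When the covers approach the universal cover (injectivity radius $\to\infty$ almost surely) the claim is immediate, and one in fact gets $\nu_\infty$ equal to the density of states of $J_T$, whose support is $\sigma(J_T)$ by \cite{abks}, so equality holds. In general, however, a connected cover can carry short cycles --- or loops inherited from $\G_0$ --- of positive density, so that tree-like balls are scarce or absent and the transplantation produces nothing. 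I expect this local-geometry control to be the decisive obstacle, not surmountable by rigid transplantation alone, and likely requiring either additional structural hypotheses on the tower or a softer, operator-algebraic substitute.

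For the latter I would pass to the representation-theoretic picture, cleanest for normal covers: if $\G_n \to \G_0$ is Galois with deck group $Q_n$, then $\nu_n = \sum_{\rho \in \widehat{Q_n}} \tfrac{(\dim\rho)^2}{|Q_n|}\,\hat\nu_\rho$, a Plancherel average of the normalized spectral measures of the $\rho$-twisted operators $J_\rho$ on $\G_0$. Now $\sigma(J_T)$ is exactly the spectrum of $J_0$ in the regular representation of $\pi_1(\G_0)$, and for a separating tower ($\bigcap_n \ker(\pi_1(\G_0)\to Q_n)=\{1\}$) residual finiteness gives $\sigma(J_T)=\overline{\bigcup_{n,\rho}\sigma(J_\rho)}$; the goal then becomes a quantitative equidistribution statement, that a Plancherel-positive share of the twisted spectra clusters near each $\lambda\in\sigma(J_T)$. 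The difficulty resurfaces in dual form: the Plancherel-typical representation of $Q_n$ need not be spectrally close to the regular representation unless the covers locally resemble $T$ (indeed for a non-separating tower $\overline{\bigcup_{n,\rho}\sigma(J_\rho)}$ can be a proper subset of $\sigma(J_T)$). My honest expectation is that the crux of any proof is a clean criterion --- presumably an amenability or weak-containment condition on the invariant random subgroup produced by the tower --- guaranteeing that the regular representation's spectrum is not lost in the limit, and that without such a condition the containment may need to be restricted accordingly.
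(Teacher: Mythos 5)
First, a point of orientation: the paper does \emph{not} prove this statement. It appears as Conjecture \ref{conj:GeneralizedAB} and is left open; the only support offered is the indirect evidence from \cite{bordenave-collins}, which itself requires the covers to be locally tree-like around two points. So there is no proof of the paper's to compare yours against; the only question is whether your proposal could settle the conjecture. Within its stated scope, your reduction is correct: the Chebyshev inequality $I-P_n\leq\varepsilon^{-2}(J_n-\lambda)^2$, the transplantation of a finitely supported quasimode from $T$ into any ball of $\mathcal{G}_n$ on which the covering map is injective, and the separation argument producing $c\,|V(\mathcal{G}_n)|$ orthonormal quasimodes, together prove the following: \emph{if}, for every fixed $R$, a uniformly positive proportion of vertices of $\mathcal{G}_n$ has injectivity radius greater than $R+1$ (equivalently, the $\mathcal{G}_n$ Benjamini--Schramm converge to $T$), then $\sigma(J_T)\subseteq\operatorname{supp}\nu_\infty$. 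You correctly identify this positive-density condition as the crux.

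Your suspicion that connectivity alone cannot supply this condition, and that without it the containment may fail, is not just reasonable caution --- it can be made into a counterexample, so the conjecture as literally stated is false and no completion of your argument (or any other) can prove it. Take $\mathcal{G}_0$ to be the graph with two vertices $u,w$ joined by three parallel edges $e_1,e_2,e_3$ (weights $1$, zero potential); multiple edges are explicitly allowed by the paper. Then $J_T$ is the adjacency operator of the $3$-regular tree, so $\sigma(J_T)=[-2\sqrt{2},2\sqrt{2}]\ni 0$. Define connected $n$-fold covers $\mathcal{G}_n$ with vertices $u_1,w_1,\dots,u_n,w_n$ by lifting $e_1,e_2$ to edges between $u_i$ and $w_i$, and $e_3$ to an edge between $u_i$ and $w_{i+1}$ (indices mod $n$). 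As a weighted graph, $\mathcal{G}_n$ is a $2n$-cycle whose edge weights alternate between $2$ and $1$, so its eigenvalues are $\pm\left|2+e^{2\pi i k/n}\right|$, $k=0,\dots,n-1$, all lying in $[-3,-1]\cup[1,3]$. The weak limit $\nu_\infty$ exists and is supported exactly on $[-3,-1]\cup[1,3]$; hence the whole interval $(-1,1)\subseteq\sigma(J_T)$ is missed. (The same mechanism works for $\mathcal{G}_0$ a single vertex with two loops and $\mathcal{G}_n$ an $n$-cycle with a loop at every vertex: there $\operatorname{supp}\nu_\infty=[0,4]$ while $\sigma(J_T)=[-2\sqrt{3},2\sqrt{3}]$.) Note this does not contradict Theorem \ref{thm-greenberg-jacobi} or \eqref{eq:Greenberg}, which constrain only the top of the support: the failure happens inside and at the bottom of $\sigma(J_T)$. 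The structural reason is the one you anticipated: these covers Benjamini--Schramm converge not to $T$ but to an amenable $\mathbb{Z}$-cover of $\mathcal{G}_0$, and $\nu_\infty$ is the density of states of that cover, whose band structure has a gap inside $\sigma(J_T)$; no vertex in any $\mathcal{G}_n$ has injectivity radius bigger than $2$, so there are no tree-like balls to transplant into.

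The constructive conclusion is that your program succeeds precisely when your missing hypothesis is added: if the injectivity radius exceeds any fixed $R$ on a set of vertices of density bounded below (in particular, if it tends to infinity in density), your transplantation argument proves the containment, and the standard moment computation then upgrades it to $\nu_\infty$ being the density of states of $J_T$, whose support equals $\sigma(J_T)$ by \cite{abks}. So the correct resolution is not a cleverer proof of the statement as written, but a reformulation with a Benjamini--Schramm (or equivalent weak-containment) hypothesis --- exactly the ``clean criterion'' your last paragraph asks for --- together with the counterexample above showing some such hypothesis is necessary.
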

Note that in general, $\sigma(J_T)$ is not connected, and there is no reason to expect the support of $\nu_\infty$ to be connected. The results of \cite{bordenave-collins} show that for any point in $\sigma(J_T)$, one can find nearby eigenvalues of lifts to finite covers, provided these finite covers are large enough and locally tree-like around two points. This is not enough to prove Conjecture \ref{conj:GeneralizedAB} but is an indication in its favor.

\smallskip

The main issue with asymptotic results such as Theorem \ref{thm-greenberg-jacobi} is the absence of error estimates. Our main new result in this paper is an Alon-Boppana type bound, with an explicit error estimate, for finite quotients of periodic Jacobi matrices on trees (or, equivalently, finite lifts of a Jacobi matrix on a finite graph). There are no restrictions in our bounds on the diagonal (the $\{b_v\}_{v\in V}$) or on the `edge-weights' (the $\{a_e\}_{e\in E}$), aside from the latter being positive.

In order to present this bound, we need to introduce some notation, recalling and extending the `lego block picture' from \cite{abs} (also see \cite{sunada}). This picture of a cover of $\mathcal{G}_0$ consists of choosing a fundamental domain and tracking its copies and their connections with each other. To be more explicit (and graphic), fix a finite connected graph, $\mathcal{G}_0$, with $p$ vertices. Let $\ell$ be the number of independent closed paths in $\mathcal{G}_0$ (aka the first Betti number of $\mathcal{G}_0$). It is possible to find $\ell$ edges such that by removing them $\mathcal{G}_0$ becomes a tree. It is important to note that the edges to be removed are \emph{not} determined uniquely by this requirement; there is a choice involved in this procedure. Denote the chosen edges by $\{e_1,\ldots,e_\ell\}$. To obtain a `lego block' we do not remove these edges, but rather cut them in half, obtaining $2\ell=d$ `dangling' half-edges $e_1^+,\ldots,e_\ell^+,e_1^-,\ldots,e_\ell^-$. An $n$-cover of $\mathcal{G}_0$ can be obtained by taking a $d$-regular graph, $G$, on $n$ edges, placing copies of the lego block at each vertex and gluing an $e_j^+$ to an $e_j^-$ along an edge. It is not hard to see that any cover of $\mathcal{G}_0$ can be obtained by this procedure.

Now, if $J_0$ is a Jacobi matrix on $\mathcal{G}_0$, let $B$ be its restriction to the tree obtained from $\mathcal{G}_0$ by removing $e_1,\ldots,e_\ell$, and write
\begin{equation} \label{eq:LegoJacobi1}
J=B+\sum_{i=1}^\ell (A_i+A_i^*)
\end{equation}
where $A_i$ is a $p\times p$ matrix defined by choosing a direction for $e_i$ and letting
$$(A_i)_{vw}=a_{e_i}$$
if $v$ is the initial vertex of $e_i$ and $w$ is the terminus of $e_i$, and $0$ everywhere else (so if $e_i$ is a loop at $v$ then $A_i$ has $a_{e_i}$ on the diagonal entry corresponding to $v$ and $A_i=A_i^*$). The lift, $J$, of $J_0$ to a (finite or infinite) cover $\mathcal{G}$ of $\mathcal{G}_0$ can be regarded as an operator on $L^2(G, \C^p)$, where $G$ is a (finite or infinite) $d$-regular graph, and
\begin{equation} \label{eq:LegoJacobi2}
		(Jx)_u
		=
		B x_u
		+ \sum_{i=1}^\ell
		\left(
			A_i x_{u+e_i}
			+
			A_i^* x_{u-e_i}
		\right),
\end{equation}
where we slightly abuse notation and write $\left\{ u \pm e_i \right\}_{i=1}^\ell$ to denote the $d$ neighbors of $u$ in $G$. We regard $G$ as the `skeleton' of $\mathcal{G}$.
We let $A = \sum_{i=1}^\ell \left( A_i + A_i^* \right)$. Our main result is

\begin{thm} \label{thm-alon-boppana-jacobi1}
Let $J_0$ be a Jacobi matrix on a finite graph $\mathcal G_0$. Let $J$ be the lift of $J_0$ to a finite cover of $\mathcal G_0$, and we view $J$ as acting on $L^2(G,\C^p)$ for some $d$-regular finite graph $G$ as in \eqref{eq:LegoJacobi2}.

Suppose that there exist a pair of edges $e_1, e_2 \in E(G)$ weighted by either $A_j$ or $A_j^*$ for some fixed $j \in \{1,..., \ell\}$, such that the distance between the ends of $e_1$ and the ends of $e_2$ is greater than $2r+2$.

Then for every unit vector with non-negative entries, $y \in \C^p$,
\begin{equation}\label{ineq-explicit-asymptotics}
		\lambda_2(J)
		\geq
		\langle By,y \rangle
		+
		2 \sqrt{d - 1}
		\frac	
		{\langle Ay, y \rangle}
		{d}
		- \frac{C_j}{r}
\end{equation}
	where
\begin{equation*}
		C_j=
		(2 \sqrt{d - 1} - 1)
		\frac{ \langle  Ay, y \rangle}{d}
		+
		\left| \frac{ \langle Ay, y \rangle}{d}
		- \langle A_j y,  y \rangle \right|.
\end{equation*}
\end{thm}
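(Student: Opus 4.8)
The plan is to reduce the vector-valued problem to a scalar Alon--Boppana estimate by restricting to test functions of the product form $x_u = c_u\, y$, and then to run a Nilli-type two-ball argument on the skeleton $G$, with the non-uniformity of the edge weights being the feature that has to be controlled. Concretely, I would first compress $J$ to the subspace $V_y = \{\, c\otimes y : c\in L^2(G)\,\}\cong L^2(G)$. A direct computation from \eqref{eq:LegoJacobi2} shows that on $V_y$ the quadratic form of $J$ coincides with that of the scalar operator
\begin{equation*}
\tilde J = \langle By,y\rangle\, I + M,
\end{equation*}
where $M$ is the weighted adjacency matrix of $G$ assigning to an edge in direction $i$ the weight $\alpha_i := \langle A_i y, y\rangle = \langle A_i^* y, y\rangle \ge 0$ (real and non-negative since $y$ and the $A_i$ have non-negative entries). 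As $\tilde J$ is the compression of $J$ to $V_y$, Cauchy interlacing gives $\lambda_2(J)\ge \lambda_2(\tilde J) = \langle By,y\rangle + \lambda_2(M)$, so everything reduces to proving $\lambda_2(M) \ge 2\sqrt{d-1}\,\langle Ay,y\rangle/d - C_j/r$, where I note that $\langle Ay,y\rangle/d = \tfrac{1}{\ell}\sum_i \alpha_i =: \bar\alpha$ is exactly the \emph{average} edge weight.

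Next I would build, around each of the two prescribed direction-$j$ edges $e_1,e_2$, a radial test function $f(v) = (d-1)^{-\operatorname{dist}(v,e)/2}$ (distance to the nearer endpoint), truncated at distance $r$. The hypothesis that the ends of $e_1$ and of $e_2$ are at distance $>2r+2$ guarantees that $f_1$ and $f_2$ have disjoint supports with no edges between them, so both the quadratic form and the norm split over $f_1,f_2$. Since $M\ge 0$ entrywise its Perron eigenvector $\phi$ may be taken non-negative, and since $f_1,f_2\ge 0$ one can choose $g = a f_1 + b f_2$ with $g\perp\phi$; the variational principle then gives $\lambda_2(M)\ge \langle Mg,g\rangle/\|g\|^2 \ge \min_{i=1,2}\langle M f_i,f_i\rangle/\|f_i\|^2$. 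This is exactly where both edges lying in the \emph{same} direction $j$ is used: it makes the two single-function estimates identical, yielding the single constant $C_j$.

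The heart of the matter is estimating $\langle M f,f\rangle/\|f\|^2$ for one such $f$, and here the non-uniform weights are the genuine obstacle, since the total forward-edge weight leaving a vertex depends on the direction through which the vertex was entered, so no naive radial recursion closes. The device that resolves this is to work \emph{shell by shell}: writing $N_k$ for the number of vertices at distance $k$ from the central edge and $S_k$ for the total weight of edges joining shell $k$ to shell $k+1$, the key observation is that the total weight of all $d$ edges at any vertex equals $\sum_i(\alpha_i+\alpha_i) = \langle Ay,y\rangle$, independent of the vertex. Summing this over shell $k$ and subtracting the incoming edges yields the closed recursion
\begin{equation*}
S_k = \langle Ay,y\rangle\, N_k - S_{k-1}, \qquad S_0 = 2\big(\langle Ay,y\rangle - \langle A_j y,y\rangle\big),
\end{equation*}
with $N_k = 2(d-1)^k$, whose solution is $S_k = 2\bar\alpha\,(d-1)^{k+1} + 2(\bar\alpha-\alpha_j)(-1)^k$. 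The dominant term reproduces the average weight $\bar\alpha$, while the alternating term carries precisely the deviation $\bar\alpha-\alpha_j = \langle Ay,y\rangle/d - \langle A_j y,y\rangle$. Substituting into $\langle Mf,f\rangle = 2\alpha_j + 2\sum_{k=0}^{r-1}(d-1)^{-k-1/2}S_k$ together with $\|f\|^2 = 2(r+1)$, the leading part gives $2\bar\alpha\sqrt{d-1}$, and bounding the convergent geometric tail from the alternating term by its obvious envelope produces an error of exactly $(2\sqrt{d-1}-1)\bar\alpha/r + |\bar\alpha-\alpha_j|/r = C_j/r$.

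I expect the shell recursion, and the bookkeeping that cleanly isolates $|\bar\alpha-\alpha_j|$ from the geometric tails, to be the main obstacle. The one structural point that must be secured is that the radius-$r$ neighborhoods of $e_1,e_2$ are tree-like, so that $N_k = 2(d-1)^k$ and the recursion for $S_k$ are exact; in the presence of short cycles, identified vertices can only shrink $\|f\|^2$ and adjoin non-negative terms to $\langle Mf,f\rangle$ (as $f,\alpha_i\ge 0$), so the inequality should persist, but this robustness deserves a separate verification.
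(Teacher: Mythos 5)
Your reduction and your tree computation are both correct, and in fact closer to the paper than you might guess: compressing $J$ to the product subspace $V_y$ does yield $\langle By,y\rangle I+M$, Cauchy interlacing gives $\lambda_2(J)\ge\langle By,y\rangle+\lambda_2(M)$, and your shell sums coincide exactly with the paper's bookkeeping (your formula for $S_k$ is the same as the explicit formula for $\sum_j |V_{k,j}|\bigl(\langle A_jy,y\rangle+2\sum_{i\neq j}\langle A_iy,y\rangle\bigr)$ obtained from the recursion \eqref{eq-sizesrecursion} in Lemma \ref{lem-technical}; note the paper's test vector is itself a product vector $c\otimes y$). The genuine gap is your last paragraph. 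The theorem assumes nothing about girth, so the radius-$r$ balls around $e_1,e_2$ in $G$ will generically contain cycles, and then everything your estimate rests on fails: $N_k=2(d-1)^k$, the absence of intra-shell edges, exactly one backward edge per vertex, and hence the closed recursion for $S_k$. The sentence that identifications ``can only shrink $\|f\|^2$ and adjoin non-negative terms to $\langle Mf,f\rangle$'' is not an argument: folding shortens graph distances, so it changes the radial function $f$ itself, and it shrinks shells (hence numerator and denominator simultaneously); which way the Rayleigh quotient moves is precisely what must be proved. The weighted analogue of Nilli's per-vertex count does not rescue this either: a vertex in shell $k$ is guaranteed only one backward edge, whose weight may be $\min_i\alpha_i$, so its forward weight is bounded only by $d\bar\alpha-\min_i\alpha_i$ rather than $(d-1)\bar\alpha$; on the tree this is compensated by equidistribution of entry directions across shells, which is exactly what a folded ball can destroy. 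This is the difficulty the paper's proof is organized around: it proves the Rayleigh-quotient bound only on the universal cover (Lemma \ref{lem-technical}), then transfers it to the finite graph not by transplanting a test function but by comparing top eigenvalues of ball restrictions, $\lambda_1(J\vert_{B(e_i,r)})\ge\lambda_1(J_T\vert_{B(\widetilde{e}_i,r)})$, via path counting: after adding $mI$ so that all entries are non-negative (and $J$ is positive definite), every closed path in the tree ball injects into a closed path of equal weight in the graph ball, and $\lambda_1=\limsup_n\langle\delta_v,J^n\delta_v\rangle^{1/n}$; the two-ball min--max step is then run with the Perron vectors of the restrictions $J\vert_{B(e_i,r)}$, not with radial functions. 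Without this transfer step (or an actually proved weighted Alon--Boppana estimate on non-tree-like balls, which is nontrivial and, as far as I can see, costs a worse error constant), your argument only covers graphs whose girth exceeds roughly $2r$.

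A secondary point: even in the tree-like case, your bounding step does not give ``exactly $C_j/r$''. With $\alpha_j=\langle A_jy,y\rangle$ and $\bar\alpha=\langle Ay,y\rangle/d$, bounding the alternating sum by its envelope yields the error constant $2\sqrt{d-1}\,\bar\alpha-\alpha_j+\tfrac{2}{\sqrt{d-1}}\left|\bar\alpha-\alpha_j\right|$, which strictly exceeds $C_j=(2\sqrt{d-1}-1)\bar\alpha+\left|\bar\alpha-\alpha_j\right|$ whenever $\alpha_j<\bar\alpha$, no matter how large $r$ is. This is repairable by tracking signs rather than taking absolute values: the alternating geometric sum is positive, so when $\alpha_j\le\bar\alpha$ the deviation term is non-negative and may simply be dropped, giving error exactly $2\sqrt{d-1}\,\bar\alpha-\alpha_j=C_j$ in that case, while for $\alpha_j>\bar\alpha$ your envelope bound is already at most $C_j$. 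But that sign analysis (or the paper's splitting of the boundary coefficient into $\tfrac{d-2\sqrt{d-1}}{d-1}+\tfrac{2\sqrt{d-1}-1}{d-1}$) has to be carried out; the ``obvious envelope'' alone does not land on the stated constant.
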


\begin{rem}
Since, as noted above, the choice of the edges $e_1,\ldots,e_\ell$ is not unique, it follows that there could be several different choices for the matrices $A$ and $B$ above. The theorem holds for each choice satisfying the conditions so an optimization over both this choice and the vector $y$ can be carried out.
\end{rem}

\begin{rem}
	If the diameter of $G$ is larger than $2r+4$, then
	the largeness condition holds for some choice of $j$.
	Indeed, let $v,w$ be a pair of vertices
	of distance larger than $2r+4$.
	Let $(v, v_1,, ..., w)$ be the shortest path between them.
	Choose $u$ a neighbor of $w$
	such that $A_{(w,u)}
	\in \{A_{(v,v_1)}, A^*_{(v,v_1)} \}$.
	Then the edges $(w,u), (v,v_1)$ satisfy the condition.

It follows that for any sequence of covers, $\mathcal{G}_n$, with $V(\mathcal{G}_n)\rightarrow \infty$, if $J_n$ is the lift of $J_0$ to $\mathcal{G}_n$ then for any unit vector with non-negative entries $y\in \C^p$
\begin{equation*}
		\underset{n\to\infty}{\liminf}
		\lambda_2(J_n)
		\geq
		\langle By,y \rangle
		+
		2 \sqrt{d - 1}
		\frac	
		{\langle Ay, y \rangle}
		{d}.
	\end{equation*}
This should be compared with Theorem \ref{thm-greenberg-jacobi}. In fact, we show in Proposition \ref{thm-spectral-bound} below that the right hand side in this inequality is a lower bound for the spectral radius of $J_T$.
\end{rem}

\begin{cor}[Bound on the spectral gap]
With the notation and under the conditions of Theorem \ref{thm-alon-boppana-jacobi1}, let $y$ be the normalized Perron-Frobenius eigenvector of $J_0$. Then
	\begin{equation*}
		\lambda_1(J) - \lambda_2(J)
		\leq
		\left( d - 2 \sqrt{d-1} \right)
		\frac	
		{\langle Ay, y \rangle}
		{d}
		+ \frac{C_j}{r}.
	\end{equation*}
\end{cor}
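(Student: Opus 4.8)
The plan is to read off an exact value for $\lambda_1(J)$ from Perron--Frobenius theory and combine it with the lower bound on $\lambda_2(J)$ supplied by Theorem \ref{thm-alon-boppana-jacobi1}; the spectral gap is then just the difference of these two expressions, and everything reduces to an algebraic simplification. The crucial input is that the top eigenvalue is preserved under lifting, namely $\lambda_1(J) = \lambda_1(J_0) = \langle By,y\rangle + \langle Ay,y\rangle$.

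To establish this I would first exhibit an explicit eigenvector. Writing $J_0 = B + A$ on $\C^p$, let $y$ be its normalized Perron--Frobenius eigenvector, so $J_0 y = \lambda_1(J_0)\, y$ and $y$ is strictly positive (using that $\mathcal G_0$ is connected, hence $J_0$ is irreducible). Consider the constant section $x_u \equiv y$ on $L^2(G,\C^p)$. Substituting into \eqref{eq:LegoJacobi2} and using $\sum_{i=1}^\ell (A_i + A_i^*) = A$ gives $(Jx)_u = By + Ay = (B+A)y = \lambda_1(J_0)\, y$ for every $u$, so $x$ is an eigenvector of $J$ with eigenvalue $\lambda_1(J_0)$, and it is strictly positive.

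Next I would identify this as the top eigenvalue. All off-diagonal entries of $J$ are the edge weights $a_e > 0$, so on a connected cover $J$ is irreducible (self-adjoint with nonnegative off-diagonal part), and Perron--Frobenius guarantees that its largest eigenvalue is the unique one admitting a strictly positive eigenvector. Since the constant lift is such an eigenvector with eigenvalue $\lambda_1(J_0)$, we conclude $\lambda_1(J) = \lambda_1(J_0)$, and because $\|y\|=1$ this equals $\langle By,y\rangle + \langle Ay,y\rangle$. I expect this identification, and in particular the appeal to irreducibility, to be the only genuine subtlety; for disconnected covers the left-hand side can only shrink (top eigenvalues of components coincide), so the bound holds trivially there since its right-hand side is nonnegative.

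Finally I would substitute. Applying Theorem \ref{thm-alon-boppana-jacobi1} with this $y$ (which has nonnegative entries) yields $\lambda_2(J) \geq \langle By,y\rangle + 2\sqrt{d-1}\,\langle Ay,y\rangle/d - C_j/r$. Subtracting this from $\lambda_1(J) = \langle By,y\rangle + \langle Ay,y\rangle$, the $\langle By,y\rangle$ terms cancel and the $\langle Ay,y\rangle$ terms combine, giving $\langle Ay,y\rangle\bigl(1 - 2\sqrt{d-1}/d\bigr) + C_j/r = \bigl(d - 2\sqrt{d-1}\bigr)\langle Ay,y\rangle/d + C_j/r$, which is exactly the claimed estimate.
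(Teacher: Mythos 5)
Your proposal is correct and follows essentially the same route as the paper: apply Theorem \ref{thm-alon-boppana-jacobi1} with $y$ the Perron--Frobenius eigenvector of $J_0$, use $\lambda_1(J)=\lambda_1(J_0)=\langle By,y\rangle+\langle Ay,y\rangle$, and subtract. The only cosmetic difference is that you prove the equality $\lambda_1(J)=\lambda_1(J_0)$ inline (via the constant section $x_u\equiv y$ and Perron--Frobenius), whereas the paper cites its Lemma \ref{lem-lift-spec-radius}, whose proof is exactly this lift-the-positive-eigenvector argument.
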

\begin{proof}
	Plugging $B = J_0 - A$ into
	inequality \eqref{ineq-explicit-asymptotics}, one gets
	\begin{equation*}
		\lambda_2(J) \geq
		\langle J_0 y, y \rangle
		- \langle A y, y \rangle
		+ 2 \sqrt{d-1}
		\frac{
		\langle A y, y \rangle}
		{d}
		- \frac{C_j}{r}.
	\end{equation*}
	Picking $y$ to be the normalized Perron-Frobenius
	eigenvector of $J_0$,
	and using Lemma \ref{lem-lift-spec-radius},
	\begin{equation*}
		 \langle J_0 y, y \rangle = \lambda_1(J_0) = \lambda_1(J).
	\end{equation*}
	Rearranging the terms in the inequality above finishes the proof.
\end{proof}

\begin{rem}
Note that in the case that $J$ is the adjacency matrix of a $d$-regular graph, we can take $\mathcal{G}_0$ to be a single vertex with $\ell=d/2$ loops (recall $d$ here is even). In this case $B=0$ so that $\langle Ay,y \rangle=d$ and we recover the standard Alon-Boppana bound. Of course, our bound, as stated here, does not treat the case of odd $d$ (this can be fixed using the notion of `half-loops' \cite{friedman-halfloops}).
\end{rem}

The rest of this paper is structured as follows. The next section describes the proof of Theorem \ref{thm-alon-boppana-jacobi1} and Section 3 collects some results on the comparison of eigenvalue bounds between different Jacobi matrices on the same graph. The appendix collects some auxiliary results.

\textbf{Acknowledgments.}
Research supported in part by the Israel Science Foundation (Grant No.\ 1378/20) and in part by the United States-Israel Binational Science Foundation (Grant No.\ 2020027).


\section{An Alon-Boppana Bound for Lifts of Jacobi Matrices}

We begin with a technical lemma, whose proof is heavily inspired by the proof in \cite{Alon}.

\begin{lem}\label{lem-technical}
	Let $J_0$ be a Jacobi matrix on a finite connected graph $\mathcal{G}_0$, with $p$ vertices. Let $\ell$ be its first Betti number and $\{e_1,\ldots,e_\ell\}$ be a choice of edges whose removal turns $\mathcal{G}_0$ to a tree.
	Let $J_T$ be its lift to a periodic Jacobi matrix
	on the universal cover of $\mathcal{G}_0$, and we view $J_T$ as acting on the space of $\mathbb{C}^p$ valued functions on the $d=2\ell$ regular tree, $\mathcal T_d$.
	Choose $1\leq i_0 \leq \ell$ and fix an edge $(v,v') \in E(\mathcal T_d)$
	with weight $A_{i_0}$ or $A_{i_0}^*$ (which we henceforth denote by $A_\star$).
	
For every
	$r > 0$
	and any unit vector with non-negative entries, $y \in \R^p$,
	there exists a unit vector $x \in L^2(\mathcal T_d, \R^p)$
	supported on a ball of radius $r-1$ around
	the pair $\{v,v'\}$, such that
	\begin{equation*}
		\langle J_T x, x \rangle
		\geq
		\langle By,y \rangle
		+
		2 \sqrt{d - 1}
		\frac	
		{\langle Ay, y \rangle}
		{d}
		- \frac{C_\star}{r}
	\end{equation*}
	where
	\begin{equation*}
		C_\star=
		(2 \sqrt{d - 1} - 1)
		\frac{ \langle  Ay, y \rangle}{d}
		+
		\left| \frac{ \langle Ay, y \rangle}{d}
		- \langle A_\star y,  y \rangle \right|.
	\end{equation*}
\end{lem}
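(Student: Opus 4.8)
The plan is to adapt Nilli's test-function argument from \cite{Alon} to the matrix-weighted setting. I would root the tree at the edge $(v,v')$, write $|u|$ for the distance from a vertex $u$ to the set $\{v,v'\}$, and try the radial ansatz $x_u = g_{|u|}\,y$ with $g_k = (d-1)^{-k/2}$ for $0 \le k \le r-1$ and $g_k = 0$ otherwise; this is supported on the ball of radius $r-1$ as required, and has real, non-negative entries since $y$ does. The first point is that feeding a single fixed vector $y$ into every coordinate collapses the matrix problem to a scalar one: expanding $\langle J_T x, x\rangle$ and using $\langle A_i y, y\rangle = \langle A_i^* y, y\rangle =: \alpha_i$ (real, and $\ge 0$ because $y\ge 0$ and each $A_i$ has a single positive entry), one finds
\[
\langle J_T x, x\rangle = \langle By,y\rangle \sum_u g_{|u|}^2 + 2\sum_{e=\{a,b\}} \alpha_{i(e)}\, g_{|a|}\, g_{|b|},
\]
where $i(e)$ is the type of the edge $e$, so that everything is governed by the scalar edge-weights $\alpha_{i(e)}$ and the distances to the root.

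The key structural observation, replacing the regularity used in the unweighted case, is that the total $\alpha$-weight of the $d$ edges incident to any vertex equals $\sum_i 2\alpha_i = \langle Ay,y\rangle =: W$, independently of the vertex. Writing $S_k$ for the shell of vertices at distance $k$ (so $|S_k| = 2(d-1)^k$) and $w_k$ for the total $\alpha$-weight of edges joining $S_{k-1}$ to $S_k$, this uniformity yields the clean recursion $w_{k+1} = |S_k|\,W - w_k$ with initial value $w_1 = 2\left(W - \langle A_\star y, y\rangle\right)$, the latter encoding the special root edge. With $g_k = (d-1)^{-k/2}$ one has $\|x\|^2 = \sum_{k=0}^{r-1} 2(d-1)^k g_k^2 = 2r$, and the ``ideal'' uniform-weight tree (every edge carrying $W/d$, i.e.\ $w_k^{\mathrm{id}} = 2(d-1)^k W/d$) solves the same recursion; for it the cross term reproduces verbatim Nilli's computation and contributes $2\sqrt{d-1}\,\tfrac{W}{d} - \tfrac{2\sqrt{d-1}-1}{r}\,\tfrac{W}{d}$ to the Rayleigh quotient $\langle J_T x,x\rangle/\|x\|^2$. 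This already produces the main term together with the first half of $C_\star$.

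It remains to estimate the deviation of the true weights from the ideal ones. Since both $w_k$ and $w_k^{\mathrm{id}}$ satisfy the same recursion, their difference $\delta_k = w_k - w_k^{\mathrm{id}}$ obeys $\delta_{k+1} = -\delta_k$, whence $\delta_k = (-1)^{k-1}\delta_1$ with $\delta_1 = 2\left(W/d - \langle A_\star y, y\rangle\right)$; thus the deviation never grows, it merely alternates in sign. Feeding this, together with the root-edge discrepancy $\langle A_\star y,y\rangle - W/d$, into the cross term and dividing by $\|x\|^2 = 2r$, the total correction equals $\tfrac{1}{r}\left(W/d - \langle A_\star y,y\rangle\right)$ times the bounded alternating factor $-1 + 2\sum_{k=0}^{r-2}(-1)^k (d-1)^{-k-1/2}$. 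The one genuine estimate, and the place where I expect to need care, is checking that this factor has absolute value at most $1$ (it lies in $(-1,1)$ for $d\ge 3$, since the alternating tail is positive and bounded by $(d-1)^{-1/2}$, and stays within $[-1,1]$ in the degenerate case $d=2$), for this is exactly what bounds the correction by $\tfrac{1}{r}\left|W/d - \langle A_\star y,y\rangle\right|$ and so recovers precisely the second half of $C_\star$. Normalizing $x$ and combining the three contributions then gives the stated inequality.
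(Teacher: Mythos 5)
Your proof is correct and takes essentially the same approach as the paper: the same radial test function, the same collapse to scalar edge weights $\langle A_i y, y\rangle$, and the same alternating recursion---your shell-weight recursion $w_{k+1} = |S_k| W - w_k$ is exactly the paper's per-type recursion $|V_{k,j}| = 2|V_{k-1}| - |V_{k-1,j}|$ summed against $\langle A_j y, y\rangle$. The only differences are organizational: you compute $\langle J_T x, x\rangle$ directly rather than passing through $L = \mathcal{A} - J_T$, and you package the error as an ideal-versus-actual deviation (with the check $|F|\leq 1$) instead of the paper's explicit five-term expansion; both yield the identical constant $C_\star$.
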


\begin{proof}
	Denote the pair ${V_0 = \{v, v'\}}$,
	and let $V_k$ be the set of vertices in $\mathcal T_d$
	of distance $k$ from $V_0$.
	Define
	$x \in L^2(\mathcal T_d, \C^p)$ by

	$$x_u = \begin{cases}
		\frac{1}{\sqrt{2r}}
		\left( \frac{1}{\sqrt{d - 1}} \right)^k
		\cdot y & \text{if } u\in V_k \text{ and } k < r, \\
		0 & \text{otherwise}.
	\end{cases}$$
	Since $y$ is normalized and $|V_k| = 2(d-1)^k$,
	it is clear that
	$$\norm{x}^2 = \frac{1}{2r} \sum_{k=0}^{r-1} |V_k|
	\left( \frac{1}{\sqrt{d - 1}} \right)^{2k} = 1.$$

	Recall the notation $A = \sum_{i=1}^\ell \left( A_i + A_i^* \right)$.
	This is a $p \times p$ matrix.
	Define the operator $\mathcal A$ on $L^2(\mathcal{T}_d, \mathbb{C}^p)$
	by putting copies of $A$ on the diagonal, namely
	\begin{equation*}
		(\mathcal A x)_u = A x_u.
	\end{equation*}
	Define the operator
	$L = \mathcal A - J_T$, so
	\begin{equation*}
		(Lx)_u =
		-B x_u + \sum_{i=1}^\ell
		\left( A_{i} (x_u - x_{u+e_i})
		+ A_{i}^* (x_u - x_{u-e_i}) \right)
	\end{equation*}
	and
	\begin{align*}
		\langle Lx, x \rangle
		&=
		- \langle B y, y \rangle
		+ \sum_{u \in V(\mathcal T_d)}
		\sum_{i=1}^\ell \left(
		\langle A_i(x_u - x_{u+e_i}), x_u \rangle +
		\langle A_i^*(x_u - x_{u-e_i}), x_u \rangle
		\right).
	\end{align*}
	Note that every edge $(u,w)\in E(\mathcal T_d)$
	appears exactly twice in the sum.
	If $A_{(u,w)}$ denotes the weight of
	the edge (which would be either $A_i$ or $A_i^*$
	for some $i \in \{1,...,\ell\}$),
	the edge appears
	\begin{itemize}
		\item once as $ \langle A_{(u,w)} (x_u - x_w), x_u \rangle$,
		\item and once as $ \langle A_{(u,w)}^* (x_w - x_u), x_w \rangle$.
	\end{itemize}
	Their sum is
	$ \langle x_u - x_w, A_{(u,w)}^* x_u - A_{(u,w)} x_w \rangle$,
	so
	\begin{equation*}
		\langle Lx, x \rangle
		=
		-\langle B y, y \rangle
		+ \sum_{(u, w) \in E(\mathcal T_d)}
		 \langle x_u - x_w, A_{(u,w)}^* x_u - A_{(u,w)} x_w \rangle.
	\end{equation*}
	We shall enumerate the edges in the sum
	according to their distance from $V_0$.
	\begin{equation}\label{eq-sumedges}
		\langle Lx, x \rangle
		= -\langle By, y \rangle
		+ \sum_{k=0}^{r-1}
		\sum_{(u, w) \in E_k}
		 \langle x_u - x_w, A_{(u,w)}^* x_u - A_{(u,w)} x_w \rangle
	\end{equation}
	where $E_k$ denotes the set of edges connecting $u \in V_k$
	to $w \in V_{k+1}$.

	If $(u, w) \in E_k$ for $k<r-1$, then
	\begin{align*}
		\langle x_u - x_w, A_{(u,w)}^* x_u - A_{(u,w)} x_w \rangle
		&=
		\langle x_u - x_w, A_{(u,w)}^* x_u \rangle
		-
		\langle x_u - x_w, A_{(u,w)} x_w \rangle \\
		&=
		\frac{1}{2r}
		\left(
			\left( \frac{1}{\sqrt{d -1}} \right)^k -
			\left( \frac{1}{\sqrt{d -1}} \right)^{k+1}
		\right)  \\
		&\quad \times
		\left(
			\left( \frac{1}{\sqrt{d -1}} \right)^k
			\langle y, A_{(u,w)}^* y \rangle
			-
			\left( \frac{1}{\sqrt{d -1}} \right)^{k+1}
			\langle y, A_{(u,w)} y \rangle
		\right) \\
		&=
		\frac{1}{2r}
		\left(
			\left( \frac{1}{\sqrt{d -1}} \right)^k -
			\left( \frac{1}{\sqrt{d -1}} \right)^{k+1}
		\right)^2
		\langle A_{(u,w)} y, y \rangle \\
		&=
		\frac{1}{2r}
		\frac{d - 2 \sqrt{d -1}}{d - 1}
		\left( \frac{1}{\sqrt{d -1}} \right)^{2k}
		\langle A_{(u,w)} y, y \rangle .
	\end{align*}
	And if $k=r-1$ then
	\begin{equation*}
		\langle x_u - x_w, A_{(u,w)}^* x_u - A_{(u,w)} x_w \rangle
		=
		\frac{1}{2r}
		\left( \frac{1}{\sqrt{d - 1}} \right)^{2(r-1)}
		\langle A_{(u,w)} y,y \rangle.
	\end{equation*}
	We break down the sum in \eqref{eq-sumedges} even further.
	For $k=1,...,r-1$ and $j=1,...,\ell$,
	let
	$V_{k,j} = \left\{ u \in V_k \mid
		u + e_j \in V_{k-1} \text{ or } u - e_j \in V_{k-1} \right\}$
	be the set of vertices in $V_k$ connected to $V_{k-1}$
	via either $A_j$ or $A_j^*$.
	More importantly, we can easily enumerate
	the $2\ell - 1$ edges going from $V_{k,j}$ to $V_{k+1}$:
	for every vertex, there will be
	one edge weighted by $A_j$ (or its conjugate)
	and two edges weighted by $A_i$ (and its conjugate)
	for every $i \neq j$.
	We define the sets $V_{0,j}$ in a compatible manner:
	Recall that the function $x$ is defined radially around an edge
	whose weight we denote by $A_\star$.
	Let
	\begin{equation*}
		V_{0, j} =
		\begin{cases}
			V_0  & \text{if }A_\star\text{ is either }A_j\text{ or }A_j^* \\
			\emptyset  & \text{otherwise}.
		\end{cases}
	\end{equation*}
	Now,
	\begin{equation*}
		\langle Lx, x \rangle
		=
		- \langle By, y \rangle
		+ \sum_{k=0}^{r-1}
		\sum_{j=1}^\ell
		\sum_{(u, w) \in E_{k,j}}
		 \langle x_u - x_w, A_{(u,w)}^* x_u - A_{(u,w)} x_w \rangle.
	\end{equation*}
	where
	$E_{k,j} := \{(u, w) \mid
		u \in V_{k,j} ,\enskip w \in V_{k+1}\}$.
	Using the simplification of the inner product calculated
	above, we get for $k<r-1$
	\begin{align*}
		\sum_{j=1}^\ell
		\sum_{(u, w) \in E_{k,j}}
		 \langle x_u - x_w, A_{(u,w)}^* x_u - A_{(u,w)} x_w \rangle
		&=
		\frac{1}{2r}
		\frac{d - 2 \sqrt{d - 1}}
		{d - 1}
		\left( \frac{1}{\sqrt{d - 1}} \right)^{2k} \\
		&\quad \times
		\sum_{j=1}^\ell
		\left| V_{k, j} \right|
		\left( \langle A_j y,y \rangle
		+ 2\sum_{i \neq j} \langle A_i y,y \rangle\right)
	\end{align*}
	and similarly for $k = r-1$
	\begin{align*}
		\sum_{j=1}^\ell
		\sum_{(u, w) \in E_{r-1,j}}
		 \langle x_u - x_w, A_{(u,w)}^* x_u - A_{(u,w)} x_w \rangle
		 &=
		 \frac{1}{2r}
		\left( \frac{1}{\sqrt{d - 1}} \right)^{2(r-1)} \\
		&\quad \times
		\sum_{j=1}^\ell
		\left| V_{r-1, j} \right|
		\left( \langle A_j y,y \rangle
		+ 2\sum_{i \neq j} \langle A_i y,y \rangle\right).
	\end{align*}
	It will be useful to explicitly calculate $|V_{k,j}|$.
	For every fixed $j$, these sizes obey a recursion formula
	due to the tree structure
	\begin{equation}\label{eq-sizesrecursion}
		|V_{k,j}|
		=
		2 |V_{k-1}| - |V_{k-1,j}|
	\end{equation}
	with initial conditions
	\begin{equation*}
		|V_{0, j}| =
		\begin{cases}
			2  & \text{if }A_\star\text{ is either }A_j\text{ or }A_j^* \\
			0  & \text{otherwise}.
		\end{cases}
	\end{equation*}
	Solving the recursion equation, one gets
	\begin{equation*}
		|V_{k, j}| =
		\frac{2|V_k|}{d}
		+ (-1)^{k+1} \cdot \beta_j
	\end{equation*}
	where
	\begin{equation*}
		\beta_j =
		\begin{cases}
			\frac{4}{d} - 2 &
			\text{if }A_\star\text{ is either }A_j\text{ or }A_j^* \\
			\frac{4}{d} & \text{otherwise}.
		\end{cases}
	\end{equation*}
	Now an explicit calculation shows
\begin{equation} \nonumber
\sum_{j=1}^\ell
		|V_{k, j}|
		\left( \left<A_j y,y \right>
		+
		2 \sum _{i \neq j}
		\left<A_i y,y \right>
		\right)
=|V_k| (d-1) \frac{\langle Ay,y \rangle}{d}
+(-1)^{k} \cdot 2
\left(\frac{\langle Ay,y \rangle}{d} 
- \langle A_\star y,y \rangle \right).
\end{equation}
	Substituting in the equations above, we get
	\begin{align*}
		\left< Lx, x \right>
		& =
		-\left< By, y \right> \\
		&\quad +
		\frac{1}{2r}
		(d - 2 \sqrt{d - 1})
		\frac{ \langle A y,y \rangle}{d}
		\sum_{k=0}^{r - 1}
		\left| V_k \right|
		\left( \frac{1}{\sqrt{d - 1}} \right)^{2k}
		 \\
		&\quad +
		\frac{1}{2r}
		(2 \sqrt{d - 1} - 1)
		\frac{ \langle A y,y \rangle}{d}
		|V_{r-1}|
		\left( \frac{1}{\sqrt{d - 1}} \right)^{2(r-1)}
		\\
		&\quad +
		\frac{1}{r}
		\cdot
		\frac{d - 2\sqrt{d - 1}}{d - 1}
		 \left(
		\frac{ \langle A y,y \rangle}{d}
		- \left<A_\star y,y \right> \right)
		\sum_{k=0}^{r-1}
		(-1)^k
		\left(\frac{1}{\sqrt{d - 1}} \right)^{2k} \\
		&\quad +
		\frac{1}{r}
		\cdot
		\frac{2 \sqrt{d-1} - 1}{d - 1}
		 \left(
		\frac{ \langle A y,y \rangle}{d}
		- \left<A_\star y,y \right> \right)
		(-1)^{r-1}
		\left(\frac{1}{\sqrt{d - 1}} \right)^{2(r-1)}.
	\end{align*}
	Let us treat each of the five terms separately.
	The first term is left as is.
	The second term clearly equals
	\begin{equation*}
		(d - 2 \sqrt{d - 1})
		\frac{ \langle Ay ,y \rangle}{d}.
	\end{equation*}
	Due to the tree structure
	$|V_k| = 2 (d - 1)^k$
	for every $k$,
	hence the third term equals
	\begin{equation*}
		\frac{1}{r}
		(2 \sqrt{d - 1} - 1)
		\frac{ \langle Ay, y\rangle}{d}.
	\end{equation*}
	The fourth term is bounded from above by
	\begin{equation*}
		\frac{1}{r} \cdot
		\frac{d - 2\sqrt{d - 1}}{d - 1}
		\left| \frac{ \langle Ay,y \rangle}{d} -
		\langle A_\star y,y \rangle \right|
	\end{equation*}
	and the fifth by
	\begin{equation*}
		\frac{1}{r} \cdot
		\frac{2 \sqrt{d-1} - 1}{d-1}
		 \left|
		\frac{ \langle Ay,y \rangle}{d} -
		\langle A_\star y,y \rangle \right|.
	\end{equation*}
	Therefore
	\begin{align*}
		\left<Lx,x \right>
		\leq
		-\left< By, y \right>
		+
		(d - 2 \sqrt{d - 1})
		\frac{ \langle Ay, y \rangle}{d}
		+
		\frac{C_\star}{r}
	\end{align*}
	where
	\begin{equation*}
		C_\star=
		(2 \sqrt{d - 1} - 1)
		\frac{ \langle  Ay, y \rangle}{d}
		+
		\left| \frac{ \langle Ay, y \rangle}{d}
		- \langle A_\star y,y \rangle \right|.
	\end{equation*}
	All that is left is to return to our Jacobi operator.
	Recall that $L = \mathcal A - J_T$
	and note that $ \langle \mathcal A x, x \rangle
	= \langle Ay, y \rangle$,
	so
	\begin{equation*}
		\langle J_T x, x \rangle
		\geq
		\langle By, y \rangle
		+ 2 \sqrt{d - 1}
		\frac
		{\langle Ay, y \rangle}
		{d}
		- \frac{C_\star}{r}.
	\end{equation*}
\end{proof}

\begin{proof}[Proof of Theorem \ref{thm-alon-boppana-jacobi1}]
	First, we note that by adding a constant diagonal term (which adds the same constant to both sides of \eqref{ineq-explicit-asymptotics}) we may assume that $J$ is positive definite.

	Second, for convenience we denote by $\widehat C$
	the R.H.S.~of Inequality \eqref{ineq-explicit-asymptotics}.
	For any edge $e$ in a graph,
	denote by $B(e,k)$
	the subgraph induced by all the vertices
	of distance at most $k$ from any
	of the ends of $e$.
	If $M$ is a matrix on a graph containing $e$,
	it can be restricted to $M \vert_{B(e,k)}$,
	which we think of as operating on the subspace $L^2(B(e,k))$
	by setting all edge weights outside of $B(e,k)$
	to zero before applying $M$.
	This is equivalent to $P \circ M \circ P$
	where $P$ is the projection
	from the entire graph to $B(e,k)$.

	Let $e_1, e_2 \in E(G)$ be as in the assumption.
	For $i=1,2$,
	denote by $\widetilde e_i$
	an arbitrary element in the fiber of $e_i$
	in the universal covering tree.
	Let $J_T$ be the lift of $J_0$
	to that tree, so by
	Lemma \ref{lem-technical},
	there exist unit vectors $x_i \in B(\widetilde e_i, r)$,
	vanishing on the boundary of their respective subgraphs,
	such that
	\begin{equation*}
		\langle J_T \vert_{B(\widetilde e_i, r)}
			x_i, x_i \rangle=\langle J_T x_i,x_i \rangle
		\geq
		\widehat C.
	\end{equation*}
	It follows that
	\begin{equation*}
		\lambda_1(J_T \vert_{B(\widetilde e_i, r)})
		\geq \widehat C.
	\end{equation*}
	Composition with the covering map
	induces an injection of closed paths
	in $B(\widetilde e_i, r)$ to closed paths in
	$B(e_i, r)$.

	Now note that for any positive definite Jacobi matrix, $J$, on a connected graph, its spectral radius is equal to $$\limsup_{n\to \infty} \left(\langle \delta_v, J^n \delta_v \rangle \right)^{1/n},$$
for any vertex $v$. Thus (again, using the positivity of the entries), we get
	\begin{equation*}
		\lambda_1(J \vert_{B(e_i, r)})
		\geq
		\lambda_1(J_T \vert_{B(\widetilde e_i, r)})
		\geq \widehat C.
	\end{equation*}
	Let $z_i$ denote the Perron-Frobenius eigenvector
	of $J \vert_{B(e_i, r)}$,
	and define the vector $w_i : V(G) \to \R_+$
	by
	\begin{equation*}
		w_i(v) =
		\begin{cases}
			z_i(v) & v \in B(e_i, r)  \\
			0 & \text{otherwise}.
		\end{cases}
	\end{equation*}
	It is not hard to see that
	\begin{equation*}
		\frac{ \langle J w_i, w_i \rangle}
		{\norm{w_i}^2}
		=
		\frac{ \langle J \vert_{B(e_i, r)} z_i, z_i \rangle}
		{\norm{z_i}^2}
		\geq
		\widehat C.
	\end{equation*}
	This bound on the Rayleigh quotient
	also holds for every linear combination
	of $w_1, w_2$.
	Indeed, we picked $e_1, e_2$ far apart enough
	so that the supports of $w_1, w_2$ are disjoint
	even after applying $J$ on them, so
	for every $\alpha, \beta \in \C$,
	\begin{equation*}
		\frac
		{ \langle J \left( \alpha w_1 + \beta w_2 \right),
		\alpha w_1 + \beta w_2 \rangle}
		{\norm{\alpha w_1 + \beta w_2}^2}
		=
		\frac
		{\alpha^2 \langle Jw_1, w_1 \rangle +
		\beta^2 \langle J w_2, w_2 \rangle}
		{\alpha^2 \norm{w_1}^2 + \beta^2 \norm{w_2}^2}
		\geq
		\widehat C.
	\end{equation*}
	To conclude,
	the min-max theorem tells us
	that the second largest eigenvalue is
	bounded from below by the maximal Rayleigh
	quotient of vectors perpendicular to the
	Perron-Frobenius eigenvector.
	Taking a linear combination
	$\alpha w_1 + \beta w_2$ perpendicular
	to the Perron-Frobenius eigenvector
	of $J$, we get
	\begin{equation*}
		\lambda_2(J) \geq \widehat C.
	\end{equation*}
	
\end{proof}

The following lower bound on the $\sup$ of the spectrum of a periodic Jacobi matrix on a tree, is a direct consequence of Lemma \ref{lem-technical}.

\begin{prop}\label{thm-spectral-bound}
	Let $J_T$ be a periodic Jacobi matrix
	as in Lemma \ref{lem-technical}.
	Then for every unit vector with non-negative entries, $y \in \C^p$,
	\begin{equation*}
		\sup \sigma \left( J_T \right)
		\geq
		\langle By,y \rangle
		+
		2 \sqrt{d - 1}
		\frac	
		{\langle Ay, y \rangle}
		{d}.
	\end{equation*}
\end{prop}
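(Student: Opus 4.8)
The plan is to recognize the right-hand side as the $r\to\infty$ limit of the lower bound already produced by Lemma \ref{lem-technical}, combined with the variational characterization of the top of the spectrum of a bounded self-adjoint operator. The entire argument is essentially a limiting procedure; almost all of the work has been done in the lemma.

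First I would recall that, since $J_0$ is a Jacobi matrix on a finite graph and $\mathcal T_d$ has bounded degree with bounded entries, its lift $J_T$ is a bounded self-adjoint operator on $L^2(\mathcal T_d,\C^p)$. For such an operator the supremum of the spectrum admits the Rayleigh-quotient description
\begin{equation*}
	\sup\sigma(J_T)=\sup_{\norm{x}=1}\langle J_T x,x\rangle .
\end{equation*}
In particular, for any single unit vector $x$ we have $\sup\sigma(J_T)\geq \langle J_T x,x\rangle$. This is the only external input, and it is a standard fact about bounded self-adjoint operators.

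Next, fix a unit vector $y\in\C^p$ with non-negative entries (which we may take to be real, so that it satisfies the hypothesis of the lemma). For every $r>0$, Lemma \ref{lem-technical} supplies a unit vector $x_r\in L^2(\mathcal T_d,\R^p)$, supported on a ball of radius $r-1$ around a chosen edge, satisfying
\begin{equation*}
	\langle J_T x_r,x_r\rangle
	\geq
	\langle By,y\rangle
	+2\sqrt{d-1}\,\frac{\langle Ay,y\rangle}{d}
	-\frac{C_\star}{r}.
\end{equation*}
Combining this with the variational inequality above gives, for every $r>0$,
\begin{equation*}
	\sup\sigma(J_T)
	\geq
	\langle By,y\rangle
	+2\sqrt{d-1}\,\frac{\langle Ay,y\rangle}{d}
	-\frac{C_\star}{r}.
\end{equation*}

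Finally I would let $r\to\infty$. The crucial point is that the constant $C_\star$ depends only on $d$, $A$, $A_\star$ and $y$, and is \emph{independent of} $r$; hence $C_\star/r\to 0$ and the claimed inequality follows at once. There is no real obstacle here beyond observing this $r$-independence of $C_\star$ and invoking the variational formula for $\sup\sigma$; the substance of the estimate lives entirely in Lemma \ref{lem-technical}.
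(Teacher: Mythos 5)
Your proof is correct and follows exactly the paper's own argument: the paper likewise combines Lemma \ref{lem-technical} with the variational characterization $\sup\sigma(J_T)=\sup_{\norm{x}=1}\langle J_T x,x\rangle$ for the self-adjoint operator $J_T$, leaving the $r\to\infty$ limit implicit. Your explicit remark that $C_\star$ is independent of $r$ is a fine (if routine) elaboration of that same step.
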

\begin{proof}
	This follows from Lemma \ref{lem-technical}
	and the fact that since $J_T$ is self-adjoint,
	\begin{equation*}
		\sup \sigma(J_T)
		=
		\underset{\norm{x} = 1}{\sup}
		\langle J_T x,x \rangle.
	\end{equation*}
\end{proof}
\begin{rem}
It is natural to ask whether
\begin{equation*}
\sup \sigma \left( J_T \right)
		= \sup \left(
		\langle By,y \rangle
		+
		2 \sqrt{d - 1}
		\frac	
		{\langle Ay, y \rangle}
		{d}\right)
\end{equation*}
where the right-hand side $\sup$ is taken over all possible choices of spanning trees for $\mathcal{G}_0$ and non-negative, normalized vectors $y$. Unfortunately, we were not able to prove or disprove this. An explicit variational formula for the spectral radius of $J_T$ was obtained by Garza-Vargas and Kulkarni in \cite{vargas}.
\end{rem}


\section{Comparison Bounds}

In the paper \cite{csz} Christiansen, Simon, and Zinchenko prove a comparison result for the gap between the top eigenvalue of a Jacobi matrix on a graph and the spectral radius of its lift to the universal cover. They use the following `ground state representation' for the quadratic form of a Jacobi matrix on a graph.

\begin{lem}[essentially Theorem 2.1 in \cite{csz}] \label{thm:CSZ}
 Let $J$ be a Jacobi matrix on a finite graph, $\mathcal{G}$. Suppose that $\psi$ is a positive function on $\mathcal{G}$ satisfying
 $$
 J\psi=\lambda_1\psi
 $$
$($where $\lambda_1$ is the top eigenvalue of $J)$. Then for any $f: \mathcal{G}\to \mathbb{C}$
\begin{equation} \label{eq:GroundState}
\left \langle f\psi, (\lambda_1-J) f\psi \right \rangle=\sum_{e \in E, e=(v,w)} a_e \psi_v \psi_w(f_v-f_w)^2
\end{equation}
and the sum is over all edges, and $e=(v,w)$ means that $e$ connects $v$ and $w$.
\end{lem}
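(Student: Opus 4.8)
The plan is to verify the identity by a direct vertex-by-vertex computation, using the eigenvalue equation $J\psi = \lambda_1\psi$ both to eliminate $\lambda_1$ and to cancel all diagonal contributions, and then symmetrizing the surviving off-diagonal sum over edges.

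First I would write $g = f\psi$ for the pointwise product, $g_v = f_v\psi_v$, and expand $\langle f\psi, (\lambda_1 - J)f\psi \rangle = \sum_v \overline{g_v}\,[(\lambda_1 - J)g]_v$. The key move is to use the eigenvalue equation in the form $\lambda_1\psi_v = (J\psi)_v = J_{vv}\psi_v + \sum_{w\sim v}J_{vw}\psi_w$ to rewrite $\lambda_1 g_v = f_v(\lambda_1\psi_v)$. Since $(Jg)_v = J_{vv}g_v + \sum_{w\sim v}J_{vw}g_w = J_{vv}f_v\psi_v + \sum_{w\sim v}J_{vw}f_w\psi_w$, the two diagonal terms $f_v J_{vv}\psi_v$ and $J_{vv}g_v$ are identical and cancel exactly. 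What survives is $[(\lambda_1 - J)g]_v = \sum_{w\sim v}J_{vw}\psi_w(f_v - f_w)$, where the potential $b_v$ and any loop contributions (which live only on the diagonal) have disappeared.

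Next I would substitute this back, obtaining $\langle f\psi,(\lambda_1-J)f\psi\rangle = \sum_v \overline{f_v}\psi_v\sum_{w\sim v}J_{vw}\psi_w(f_v-f_w)$, a sum over ordered neighbor pairs. I then pair each ordered pair $(v,w)$ with its reverse $(w,v)$; using $J_{vw}=J_{wv}\in\R$ and $\psi_v,\psi_w>0$, the two contributions combine to $J_{vw}\psi_v\psi_w(\overline{f_v}-\overline{f_w})(f_v-f_w) = J_{vw}\psi_v\psi_w|f_v-f_w|^2$. Finally, expanding $J_{vw}=\sum_{e\text{ between }v,w}a_e$ recovers the stated edge sum (which equals $a_e\psi_v\psi_w(f_v-f_w)^2$ when $f$ is real, as in the setting of \cite{csz}).

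I expect the only real care to lie in the multigraph bookkeeping rather than in any genuine obstacle. I would note that a loop $e=(v,v)$ contributes $a_e\psi_v^2(f_v-f_v)^2 = 0$ to the right-hand side, consistent with the fact that loops enter $J$ only through the diagonal (with their factor of $2$), which cancels entirely in the step above; and that several parallel edges between $v$ and $w$ are handled automatically, since $J_{vw}$ is precisely the sum of their weights and the symmetrization splits back into one term per edge. Once the diagonal cancellation and the edge symmetrization are carried out carefully, the identity follows with no substantive difficulty.
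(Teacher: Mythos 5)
Your proof is correct, and it fills in a proof that the paper itself does not supply: the authors simply cite Theorem 2.1 of \cite{csz} for this ground-state representation, and your direct expansion (diagonal cancellation via the eigenvalue equation, then symmetrization over ordered neighbor pairs) is exactly the standard argument behind that result. Your bookkeeping is sound on the two points that matter in this multigraph setting --- loops and the potential cancel entirely on the diagonal, and parallel edges are recovered by splitting $J_{vw}=\sum_{e\text{ between }v,w}a_e$ --- and you are right that for complex $f$ the identity should read $|f_v-f_w|^2$, the form $(f_v-f_w)^2$ in the statement being the real-$f$ case.
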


Christiansen, Simon, and Zinchenko use \eqref{eq:GroundState} and its analogue on the tree, together with the variational characterization of the $\sup$ of the spectrum, to obtain a comparison inequality between two different periodic Jacobi matrices arising from the same graph. The purpose of this short section is to note that it is also possible to combine \eqref{eq:GroundState} with the min-max principle to compare eigenvalue gaps for two Jacobi matrices on $\mathcal{G}$. Explicitly, we have
\begin{equation} \label{eq:MinMaxGround}
\lambda_1(J)-\lambda_k(J) =
\max_{\underset{\textrm{ dim }W=n-k+1}{W \subseteq L^2(\mathcal{G}) }}
\min_{f \in W} \sum_{e \in E, e=(v,w)} a_e \psi_v \psi_w \frac{(f_v-f_w)^2}{\|f\psi\|^2},
\end{equation}
and therefore
\begin{thm} \label{thm:CSZForFinite}
Let $\mathcal{G}$ be a finite connected graph and let $J$ and $\widetilde{J}$ be two Jacobi matrices defined on $\mathcal{G}$ with parameters $\{a_e, b_v\}_{e \in E, v \in V}$ and $\{\widetilde{a}_e,\widetilde{b}_v\}$, respectively. Let $\psi$ be the normalized Perron eigenfunction for $J$ and $\widetilde{\psi}$ the normalized Perron eigenfunction for $\widetilde{J}$ and let
\begin{equation} \nonumber
S=\max_{e=(v,w)\in E} \frac{a_e \psi_v\psi_w}{\widetilde{a}_e \widetilde{\psi}_v \widetilde{\psi}_w},\quad I=\min_{v\in V} \frac{\psi_v^2}{\widetilde{\psi}_v^2}.
\end{equation}
Then for any $k=1,\ldots,n$
\begin{equation} \label{eq:Comparison}
\lambda_1(J)-\lambda_k(J) \leq \frac{S}{I}\left(\lambda_1\left(\widetilde{J}\right)-\lambda_k\left(\widetilde{J}\right) \right).
\end{equation}
In particular, if $\psi=c\widetilde{\psi}$ for some constant $c >0$, then
\begin{equation} \label{eq:Comparison1}
\min_e \frac{a_e}{\widetilde{a}_e}\left(\lambda_1\left(\widetilde{J}\right)-\lambda_k\left(\widetilde{J}\right) \right)\leq \lambda_1(J)-\lambda_k(J)\leq \max_e \frac{a_e}{\widetilde{a}_e}\left(\lambda_1\left(\widetilde{J}\right)-\lambda_k\left(\widetilde{J}\right) \right)
\end{equation}
\end{thm}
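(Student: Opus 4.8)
The plan is to run the variational formula \eqref{eq:MinMaxGround} for $J$ and $\widetilde J$ simultaneously, exploiting the structural feature that makes this comparison possible: in \eqref{eq:MinMaxGround} the outer subspace $W$ and the inner function $f$ both range over the \emph{same} space $L^2(\mathcal G)$, independently of which Jacobi matrix is under consideration. All of the matrix-dependence has been pushed into the edge weights $a_e\psi_v\psi_w$ and the vertex weights $\psi_v^2$ entering $\norm{f\psi}^2$. Consequently a single trial subspace may be fed into both variational problems, and the two ground-state Rayleigh quotients can be compared termwise.

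Concretely, I would first fix $f$ and compare the two quotients pointwise. The definition of $S$ gives $a_e\psi_v\psi_w \le S\,\widetilde a_e\widetilde\psi_v\widetilde\psi_w$ for every edge $e=(v,w)$, so the numerator of the $J$-quotient is at most $S$ times that of the $\widetilde J$-quotient. The definition of $I$ gives $\psi_v^2 \ge I\,\widetilde\psi_v^2$ for every vertex, hence $\norm{f\psi}^2 \ge I\,\norm{f\widetilde\psi}^2$ for the denominator. Dividing, every $f$ satisfies
\[
\frac{\sum_{e=(v,w)} a_e\psi_v\psi_w (f_v-f_w)^2}{\norm{f\psi}^2}
\le
\frac{S}{I}\cdot
\frac{\sum_{e=(v,w)} \widetilde a_e\widetilde\psi_v\widetilde\psi_w (f_v-f_w)^2}{\norm{f\widetilde\psi}^2}.
\]

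Next I would pass from this pointwise bound to the eigenvalue gaps. Let $W_0$ be a subspace of dimension $n-k+1$ achieving the maximum in \eqref{eq:MinMaxGround} for $J$, so that $\lambda_1(J)-\lambda_k(J)$ equals the minimum over $f \in W_0$ of the left-hand quotient. Since taking minima is monotone, the displayed pointwise inequality is inherited by the minima over $W_0$, giving $\lambda_1(J)-\lambda_k(J) \le \frac{S}{I}\min_{f\in W_0}(\cdots)$ with $(\cdots)$ the $\widetilde J$-quotient. As $W_0$ is merely one admissible $(n-k+1)$-dimensional subspace, its minimum is at most the maximum over all such subspaces, which by \eqref{eq:MinMaxGround} for $\widetilde J$ equals $\lambda_1(\widetilde J)-\lambda_k(\widetilde J)$; chaining these estimates yields \eqref{eq:Comparison}. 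For the special case $\psi=c\widetilde\psi$, I would just evaluate the constants: $\psi_v^2/\widetilde\psi_v^2\equiv c^2$ forces $I=c^2$, while $a_e\psi_v\psi_w/(\widetilde a_e\widetilde\psi_v\widetilde\psi_w)=c^2\,a_e/\widetilde a_e$ forces $S=c^2\max_e a_e/\widetilde a_e$, so $S/I=\max_e a_e/\widetilde a_e$ and \eqref{eq:Comparison} gives the upper bound in \eqref{eq:Comparison1}; the lower bound follows by applying \eqref{eq:Comparison} with the roles of $J$ and $\widetilde J$ interchanged, which replaces $S/I$ by $\max_e \widetilde a_e/a_e=(\min_e a_e/\widetilde a_e)^{-1}$ and rearranges to the stated left inequality.

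The argument is largely bookkeeping once \eqref{eq:MinMaxGround} is available, and the one place needing genuine care is the direction of the min-max: one must take the \emph{same} subspace $W_0$ (the optimizer for $J$) as trial data for $\widetilde J$, check that the pointwise quotient inequality survives the inner minimization over $W_0$, and only then relax $W_0$ to the outer maximum for $\widetilde J$. The subtler input, which I would treat as given, is \eqref{eq:MinMaxGround} itself: it rests on the substitution $g=f\psi$, a linear bijection of $L^2(\mathcal G)$ (valid because $\psi$ is strictly positive) that identifies the subspaces in the ordinary Courant--Fischer formula for $\lambda_1-\lambda_k$ with those appearing here, combined with the ground-state identity \eqref{eq:GroundState} of Lemma \ref{thm:CSZ}.
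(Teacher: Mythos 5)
Your proposal is correct and follows exactly the route the paper intends: the paper states Theorem \ref{thm:CSZForFinite} as an immediate consequence (``and therefore'') of the ground-state min-max formula \eqref{eq:MinMaxGround}, and your argument---comparing the two Rayleigh quotients termwise via $S$ and $I$, feeding the optimal subspace $W_0$ for $J$ into the variational problem for $\widetilde{J}$, and swapping the roles of $J$ and $\widetilde{J}$ for the lower bound in \eqref{eq:Comparison1}---is precisely the omitted bookkeeping, carried out with the min-max directions handled correctly.
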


A particular case of $\psi=\widetilde{\psi}$ occurs when $\psi$ and $\widetilde{\psi}$ are constant on $\mathcal{G}$. This holds for adjacency matrices on regular graphs, and, more generally, is guaranteed by the following condition.

\begin{lem}\label{lem-top-eigenvector}
	Let $J$ be a Jacobi matrix on a finite graph $\mathcal{G}$.
	Suppose that there exists a constant $C$
	such that for every vertex $v \in V(\mathcal{G})$,
	\begin{equation} \label{eq:comparison_condition}
		b_v +2\sum_{e \textrm{ is a loop at } v}a_e+
		\sum_{u\sim v}\sum_{e \in E, e=(v,u)} a_e
		= C.
	\end{equation}
	Then the Perron eigenvector
	of $J$ is the constant non-zero vector.
	
In particular, if $\mathcal{G}$ is a finite cover of a graph with a single vertex and $J$ is a lift of a Jacobi matrix defined on the single-vertex graph then the claim holds.
\end{lem}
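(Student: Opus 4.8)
The plan is to recognize condition \eqref{eq:comparison_condition} as the statement that every row of $J$ sums to the same constant $C$, and then to invoke Perron--Frobenius theory to identify the constant vector as the top eigenvector. First I would observe that the left-hand side of \eqref{eq:comparison_condition} is precisely the $v$-th row sum $\sum_{w \in V(\mathcal{G})} J_{v,w}$: the diagonal entry contributes $b_v + 2\sum_{e \text{ loop at }v} a_e$, while the off-diagonal entries contribute $\sum_{u \sim v}\sum_{e=(v,u)} a_e$. Hence the hypothesis says exactly that $J\mathbf{1} = C\mathbf{1}$, where $\mathbf{1}$ is the vector with all entries equal to $1$; in particular $\mathbf{1}$ is an eigenvector of $J$ with eigenvalue $C$.

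Next I would show that $\mathbf{1}$ is the Perron (top) eigenvector. Since each $a_e > 0$, every off-diagonal entry of $J$ is non-negative, and since $\mathcal{G}$ is connected, $J$ is irreducible. Choosing $t$ large enough that $J + tI$ has non-negative diagonal, the matrix $J + tI$ is non-negative and irreducible, so by the Perron--Frobenius theorem its spectral radius is a simple eigenvalue whose eigenvector (unique up to scaling) may be taken strictly positive, and this is the only eigenvector with all non-negative entries. The strictly positive vector $\mathbf{1}$ is an eigenvector of $J + tI$ with eigenvalue $C + t$, so it must be this Perron eigenvector and $C + t$ its largest eigenvalue. Since adding $tI$ only shifts the spectrum and leaves eigenvectors unchanged, $\mathbf{1}$ is the Perron eigenvector of $J$ and $\lambda_1(J) = C$.

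Finally, for the ``in particular'' claim I would use that a covering map $\pi : \mathcal{G} \to \mathcal{G}_0$ is a local isomorphism preserving the potential and the edge weights. Grouping the row sum at $v$ as $b_v$ plus the sum of $a_e$ over all half-edges incident to $v$ (each loop contributing two such half-edges), this quantity is preserved under $\pi$, so it equals the corresponding row sum at $\pi(v)$ in $\mathcal{G}_0$. When $\mathcal{G}_0$ has a single vertex, every $\pi(v)$ is that vertex and the row sum there is a single fixed number, so \eqref{eq:comparison_condition} holds with $C$ equal to that number, and the first part of the lemma applies.

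The main obstacle is the (standard but essential) passage from ``$\mathbf{1}$ is an eigenvector'' to ``$\mathbf{1}$ is the top eigenvector,'' which is where positivity of the weights and connectedness of $\mathcal{G}$ enter through Perron--Frobenius; the only other point requiring care is the bookkeeping in the row-sum identity, in particular the correct counting of loop contributions (appearing twice on the diagonal).
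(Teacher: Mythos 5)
Your proof is correct, and it reaches the conclusion by a different route than the paper. Both arguments hinge on the same observation -- condition \eqref{eq:comparison_condition} says exactly that every row of $J$ sums to $C$, i.e.\ $J\mathbf{1}=C\mathbf{1}$ -- but you then invoke the \emph{uniqueness} clause of Perron--Frobenius: after shifting to the non-negative irreducible matrix $J+tI$, a strictly positive eigenvector can only be the Perron eigenvector, so $\mathbf{1}$ must be it (and, as a bonus, $\lambda_1(J)=C$). The paper instead uses the Collatz--Wielandt variational characterization, $L(z)=\min_{v}\,(Jz)_v/z_v$, and shows directly that constant vectors achieve $L=C$ while any non-constant positive vector has $L(z)\le C$ (by looking at a maximal entry with a strictly smaller neighbor), so the constant vector is the maximizer and hence the Perron vector. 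Your route is arguably cleaner on one point the paper glosses over: the Collatz--Wielandt functional is a characterization for \emph{non-negative} matrices, and since the $b_v$ may be negative a diagonal shift is implicitly needed there too, whereas you perform it explicitly. Note that both arguments use connectedness of $\mathcal{G}$ (you for irreducibility, the paper to find the maximal vertex with a strictly smaller neighbor); this is the paper's standing convention even though the lemma's statement does not repeat it. Finally, you supply an actual argument for the ``in particular'' clause -- covering maps preserve the potential and the multiset of half-edge weights at each vertex, so row sums are preserved and are constant when the base has one vertex -- which the paper asserts without proof; your bookkeeping (loops counted twice on the diagonal, matching their two half-edges) is exactly right.
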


\begin{proof}
	By the proof of the Perron-Frobenius Theorem (see \cite{perron}),
	the Perron eigenvector is the vector
	with positive entries maximizing the functional
	\begin{equation*}
		L(z) =
		\min_{\substack{v \in V(\mathcal{G}) \\ z_v \neq 0}}
		\frac{(Jz)_v}{z_v}
		=
		\min_{\substack{v \in V(\mathcal{G}) \\ z_v \neq 0}}
		\frac{b_v z_v +2\sum_{e \textrm{ is a loop at } v}a_ez_v+
		\sum_{u\sim v}\sum_{e \in E e=(v,u)} a_e z_u	
		}{z_v}
	\end{equation*}
	First observe that if $z$ is a constant non-zero vector,
	then
	\begin{equation*}
		L(z) = C.
	\end{equation*}
	On the other hand, if $z$ is a non-constant vector with positive entries,
	then by considering a vertex $w$ such that $z_w = \max_{v \in V(\mathcal{G})} z_v$ and that has a neighbor where $z$ obtains a value strictly smaller then $z_w$, we see that
	\begin{equation*}
		L(z) \leq C.
	\end{equation*}
	Therefore the constant vector indeed maximizes $L$,
	and must be the Perron eigenvector.
\end{proof}

\begin{rem}
It is not hard to see, e.g.\ by comparing the adjacency matrix on a regular graph to another matrix satisfying \eqref{eq:comparison_condition}, that the inequalities are not tight, especially when one of the edge weights approaches zero.
\end{rem}

\appendix
\section{Appendix}
\subsection{Proof of Theorem \ref{thm-greenberg-jacobi}} \label{GreenbergProof}

We need the following two simple lemmas.

\begin{lem}\label{lem-lift-spec-radius}
	Let $J_0$ be a Jacobi matrix
	on a finite graph $\mathcal G_0$,
	and $J$ its lift to a finite cover $\mathcal G$.
	Then $\lambda_1(J_0) = \lambda_1(J)$.
\end{lem}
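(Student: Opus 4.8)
The plan is to produce the top eigenvalue of $J$ explicitly by pulling back the Perron eigenvector of $J_0$ along the covering map, and then to invoke the uniqueness of positive eigenvectors in the Perron-Frobenius theorem to rule out any larger eigenvalue.

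First I would reduce to the non-negative, irreducible setting. Since the off-diagonal entries of a Jacobi matrix are the positive weights $\{a_e\}$, adding a sufficiently large multiple $C$ of the identity makes $J_0 + CI$ entrywise non-negative; as $\mathcal{G}_0$ is connected this matrix is irreducible. The shift changes every eigenvalue by $C$ and leaves all eigenvectors unchanged, so it suffices to prove the claim for $J_0 + CI$ and $J + CI$, and I will continue to write $J_0, J$ for these. By the Perron-Frobenius theorem (as in \cite{perron}), $\lambda_1(J_0)$ is the top eigenvalue and admits a strictly positive eigenvector $\psi$.

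Next I would pull $\psi$ back along the covering map $\pi : \mathcal{G} \to \mathcal{G}_0$, setting $\tilde\psi_u = \psi_{\pi(u)}$. The defining property of the lift is that $\pi$ restricts to a weight-preserving isomorphism between the edge-neighborhood of each vertex $u \in V(\mathcal{G})$ and that of $\pi(u) \in V(\mathcal{G}_0)$, which is exactly the content of \eqref{eq:LegoJacobi2}. Consequently $(J\tilde\psi)_u = (J_0\psi)_{\pi(u)} = \lambda_1(J_0)\,\psi_{\pi(u)} = \lambda_1(J_0)\,\tilde\psi_u$, so $\tilde\psi$ is a strictly positive eigenvector of $J$ with eigenvalue $\lambda_1(J_0)$. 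In particular, testing the Rayleigh quotient against $\tilde\psi$ already gives $\lambda_1(J) \geq \lambda_1(J_0)$.

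For the reverse inequality I would use that a non-negative irreducible matrix admits a strictly positive eigenvector only for its Perron (top) eigenvalue. The cover $\mathcal{G}$ need not be connected, so I would apply this componentwise: each connected component $\mathcal{G}^{(i)}$ is itself a connected cover of $\mathcal{G}_0$, the restriction $J|_{\mathcal{G}^{(i)}}$ is irreducible, and $\tilde\psi|_{\mathcal{G}^{(i)}}$ is a strictly positive eigenvector with eigenvalue $\lambda_1(J_0)$; hence $\lambda_1(J_0)$ is the top eigenvalue of each $J|_{\mathcal{G}^{(i)}}$. Since $\lambda_1(J) = \max_i \lambda_1(J|_{\mathcal{G}^{(i)}})$, this yields $\lambda_1(J) = \lambda_1(J_0)$. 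The only genuinely delicate point is the bookkeeping in the pullback step — verifying that loops and multiple edges are matched correctly by the covering map, so that the local action of $J$ at $u$ really reproduces that of $J_0$ at $\pi(u)$ — but this is routine given the lego-block description of the lift.
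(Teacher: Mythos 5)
Your proposal is correct and takes essentially the same route as the paper: shift by a multiple of the identity to make all entries non-negative, pull back the Perron eigenvector of $J_0$ along the covering map, and invoke the Perron--Frobenius theorem to identify the pulled-back positive eigenvector as the top eigenvector of the lift. The only difference is that you explicitly handle disconnected covers componentwise, a point the paper's proof glosses over by implicitly assuming irreducibility of $J+mI$.
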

\begin{rem}
	If $J_0$ has only non-negative entries,
	then by Perron-Frobenius $\lambda_1$ equals
	the spectral radius. 
	So in that case, the above
	lemma also shows equality of spectral radii.
\end{rem}
\begin{proof}
	Let $m \in \R$ be large enough so that
	$J_0 + m I, J + m I$
	have only non-negative entries.
	Let $f_0$ be the Perron-Frobenius eigenvector of $J_0 + m I$,
	namely $f_0$ has non-negative entries and is
	associated to the eigenvalue $\lambda_1(J_0 + m I) = \lambda_1(J_0) + m$.
	Denote by $p:J \to J_0$ the covering map.
	Then $f := f_0 \circ p$ is an
	eigenvector of $J + m I$ associated to the 
	eigenvalue $\lambda_1(J_0) + m$.
	Since $f$ has non-negative entries,
	by the uniqueness in the Perron-Frobenius theorem,
	it must be the Perron-Frobenius eigenvector of $J + m I$,
	so it is associated to the eigenvalue 
	$\lambda_1(J+mI) = \lambda_1(J) + m$.
\end{proof}

\begin{lem}\label{lem-measuresupport}
	Let $\mu$ be a finite measure,
	and denote by $[a,b]$ the convex hull of its topological support.
	Then
	\begin{equation*}
		\limsup_{\ell \to \infty}
		\left( \int_\R x^\ell d\mu(x) \right)^{1/\ell}
		= \max\{|a|, |b|\}.
	\end{equation*}
	Moreover, if $a \geq 0$, then the $\limsup$ above
	can be replaced by $\lim$.
\end{lem}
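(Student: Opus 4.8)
The plan is to recognize this as the convergence of $L^\ell$ norms to the $L^\infty$ norm, together with some bookkeeping of the signs introduced by odd powers. Write $M = \max\{|a|,|b|\}$. Since the notation $[a,b]$ presupposes that the support of $\mu$ is compact, and the support is closed with infimum $a$ and supremum $b$, both endpoints lie in the support, and $M$ is exactly the $\mu$-essential supremum of $|x|$. I would first prove the clean two-sided statement
$$\lim_{\ell \to \infty} \left( \int_\R |x|^\ell \, d\mu(x) \right)^{1/\ell} = M,$$
and only afterwards transfer it to the genuine moments $\int x^\ell \, d\mu$.

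For the upper bound in this limit, I would use $\int |x|^\ell \, d\mu \le M^\ell \mu(\R)$, so that the $\ell$-th root is at most $M \, \mu(\R)^{1/\ell} \to M$. For the lower bound, fix $\eps \in (0, M)$ (the case $M = 0$ being trivial). The extreme point realizing $M$---namely $b$ if $M = b$, or $a$ if $M = -a$---lies in the support, so a suitable one-sided neighborhood of it carries positive mass $c_\eps > 0$, and on that neighborhood $|x| > M - \eps$. Hence $\int |x|^\ell \, d\mu \ge (M-\eps)^\ell c_\eps$, so the $\ell$-th root is at least $(M - \eps)\, c_\eps^{1/\ell} \to M - \eps$. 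Letting $\eps \downarrow 0$ gives the matching lower bound.

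To pass from $|x|^\ell$ to $x^\ell$, I would split on parity. For even $\ell$ the two integrals coincide, so along even $\ell$ the $\ell$-th root already converges to $M$, yielding $\limsup_\ell \left(\int x^\ell \, d\mu\right)^{1/\ell} \ge M$. For the reverse inequality, note $\int x^\ell \, d\mu \le \int |x|^\ell \, d\mu \le M^\ell \mu(\R)$: whenever the left side is non-negative its root is at most $M\, \mu(\R)^{1/\ell}$, and whenever it is negative (possible only for odd $\ell$) its real $\ell$-th root is negative, hence below $M$. In both cases the root is at most $\max\{M\,\mu(\R)^{1/\ell}, 0\} \to M$, giving $\limsup_\ell \left(\int x^\ell \, d\mu\right)^{1/\ell} \le M$. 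Combining the two bounds proves the equality. For the ``moreover'', if $a \ge 0$ then $x \ge 0$ on the support, so $x^\ell = |x|^\ell$ for every $\ell$, and the full limit exists and equals $M = b$.

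The only genuinely delicate point is the handling of odd powers: because $\int x^\ell \, d\mu$ can be negative, one cannot simply apply the $L^\ell$-norm convergence verbatim, and the $\limsup$ (rather than $\lim$) is forced in general; this is exactly what the hypothesis $a \ge 0$ removes. The remainder is the routine essential-supremum argument, whose one subtlety is verifying that the extremal endpoint of the support receives positive mass in each one-sided neighborhood---which holds precisely because $a$ and $b$ belong to the closed support.
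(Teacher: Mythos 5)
Your proof is correct and takes essentially the same route as the paper's: the trivial upper bound $x^\ell \leq M^\ell$ with $M = \max\{|a|,|b|\}$, and a lower bound obtained by passing to even powers and using the positive mass carried by a neighborhood of the extremal point of the support. If anything, you are slightly more careful than the paper, whose lower bound restricts the integral to $[M-\delta, M]$ and thus tacitly assumes $M = b$; your one-sided neighborhood of whichever endpoint attains $M$ also covers the case $M = -a > b$.
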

\begin{proof}
	Denote $M = \max\{|a|, |b|\}$.
	On the one hand,
	\begin{equation*}
		\left( \int_\R x^\ell d\mu(x) \right)^{1/\ell}
		\leq
		\left( \int_\R M^\ell d\mu(x) \right)^{1/\ell}
		= M \cdot \mu(\R)^{1/\ell}
		\underset{\ell \to \infty}{\longrightarrow}
		M.
	\end{equation*}
	And on the other hand, for every $\delta>0$,
	\begin{align*}
		\limsup_{\ell \to \infty}
		\left( \int_\R
			x^\ell d\mu(x) \right)^{1/\ell}
		&\geq
		\limsup_{\ell \to \infty}
		\left( \int_\R
			x^{2\ell} d\mu(x) \right)^{1/2\ell} \\
		& \geq
		\limsup_{\ell \to \infty}
		\left( \int_{M - \delta}^M
			x^{2\ell} d\mu(x) \right)^{1/2\ell} \\
		& \geq
		\limsup_{\ell \to \infty}
		\left( \int_{M - \delta}^M
			(M - \delta)^{2\ell} d\mu(x) \right)^{1/2\ell} \\
		& =
		(M - \delta) \cdot
		\limsup_{\ell \to \infty}
		\mu([M - \delta, M])^{1/2 \ell} \\
		&= M - \delta.
	\end{align*}
	The second inequality above holds because
	the integrand is non-negative.
	In the special case $a \geq 0$, this is true
	even without taking a subsequence.

\end{proof}

\begin{proof}[Proof of Theorem \ref{thm-greenberg-jacobi}]
	First we note that it suffices to prove only for
	the special case where
	$J_0$ and $J_n$ are all positive definite.
	By Lemma \ref{lem-lift-spec-radius},
	there exists a constant $m \in \R$
	such that $J_0 + m I$
	and $J_n + m I$
	are all positive definite.
	Because $J_n + m I$ and $J_T + m I$
	are lifts of $J_0 + m I$,
	and
	\begin{align*}
		\lambda_2(J_n + m I) &=
		\lambda_2(J_n) + m, \\
		\sup \sigma(J_T + m I)
		&= \sup\sigma(J_T) + m,
	\end{align*}
	we see that the positive definite case implies the
	general case.

	Denote by $\nu_{n}$ the normalized
	eigenvalue counting measure of $J_n$,
	and by $F_{n}(x) = \nu_n((-\infty, x])$
	its cumulative function.
	Suppose, for the sake of contradiction, that
	\begin{equation*}
		\liminf_{n\to\infty}
		\lambda_2(J_n)
		<
		\sup\sigma(J_T).
	\end{equation*}
	So there exists
	some $\varepsilon > 0$
	and a subsequence
	(which we take to be the original sequence
	for convenience)
	such that
	\begin{equation*}
		F_{n}
		\left(
		\sup\sigma(J_T) - \varepsilon
		\right)
		\underset{n\to\infty}{\longrightarrow} 1.
	\end{equation*}
	All the measures $d\nu_{n}$
	are supported on a common bounded set
	(see Lemma \ref{lem-lift-spec-radius}),
	contained in the positive reals
	by the positive definite assumption.
	By the Helly-Bray Theorem
	there exists a subsequential weak limit $d\nu
	= \lim d\nu_{n_k}$,
	also supported on the positive reals.
	Moreover, if $F$ denotes the cumulative function of $\nu$,
	then $F(x) = \lim_{k\to\infty}F_{n_k}(x)$
	for every continuity point $x$ of $F$.
	Since the set of discontinuity points of $F$ is countable,
	there exists $\varepsilon' \in (0,\varepsilon)$
	such that $F(\sup\sigma(J_T)) - \varepsilon') = 1$.
	It follows from Lemma \ref{lem-measuresupport} that
	\begin{equation}\label{ineq-tobecontradicted}
		\limsup_{\ell\to\infty}
		\left( \int_\R a^\ell d\nu(a) \right)^{1/\ell}
		\leq
		\sup\sigma(J_T) -\varepsilon'.
	\end{equation}
	On the other hand,
	\begin{equation*}
		\left( \int a^\ell d\nu(a) \right)^{1/\ell}
		=
		\lim_{k\to\infty}
		\left( \int a^\ell d\nu_{n_k}(a) \right)^{1/\ell}
		=
		\lim_{k\to\infty}
		\left(
			\text{tr}(J_{n_k}^\ell)
		\right)^{1/\ell},
	\end{equation*}
	where $\text{tr}$ is the normalized trace.
	That is,
	\begin{equation}\label{eq-tracemidcover}
		\left(
			\text{tr}(J_{n_k}^\ell)
		\right)^{1/\ell}
		=
		\left(
		\frac{1}{ \left| V \left( \mathcal{G}_{n_k} \right) \right|}
		\sum_{v \in V \left( \mathcal{G}_{n_k} \right)}
		\langle \delta_v , J_{n_k}^\ell \delta_v \rangle
		\right)^{1/\ell}
	\end{equation}
	We will bound this expression from below by a term
	independent of $k$, converging as $\ell \to \infty$
	to $\sup \sigma(J_T)$,
	thus contradicting \eqref{ineq-tobecontradicted}.

	Recall that there are covering maps
	\begin{center}
	\begin{tikzpicture}
		\node (Basis) at (0,0) {};
		\node[above=of Basis] (T) {$T$};
		\node[right=of Basis] (Gnk) {$\mathcal{G}_{n_k}$};
		\node[below=of Basis] (G) {$\mathcal{G}$};
		\draw[->] (T)--(Gnk) node [midway,above] {$q_k$};
		\draw[->] (T)--(G) node [midway,left] {$p$};
		\draw[->] (Gnk)--(G) node [midway,below] {$p_k$};
	\end{tikzpicture}
	\end{center}
	Because all the Jacobi parameters 
	may be assumed to be non-negative,
	and the number of closed paths
	in $\mathcal{G}_{n_k}$ is greater than in $T$,
	every $v \in V \left( \mathcal{G}_{n_k} \right)$
	and $u \in q_k^{-1}(v)$
	obey the inequality
	\begin{equation*}
		\langle \delta_v , J_{n_k}^\ell \delta_v \rangle
		\geq
		\langle \delta_u , J_{T}^\ell \delta_u \rangle.
	\end{equation*}
	Plug this inequality back into
	equation \eqref{eq-tracemidcover}
	and group together elements coming from
	the same fiber in $\mathcal{G}_{n_k}$ over $\mathcal{G}$.
	\begin{equation*}
		\left(
			\text{tr}(J_{n_k}^\ell)
		\right)^{1/\ell}
		\geq
		\left(
		\frac{1}{ \left| V \left( \mathcal{G} \right) \right|}
		\sum_{w \in V(\mathcal{G})}
		\langle \delta_{u(w)}, J_T^\ell \delta_{u(w)} \rangle
		\right)^{1/\ell}
	\end{equation*}
	where $u(w)$ is an arbitrary element in $p^{-1}(\{w\})$.
	For any fixed $u\in V(T)$, by the non-negativity
	of the summands, it follows that
	\begin{equation*}
		\left(
		\text{tr}(J_{n_k}^\ell)
		\right)^{1/\ell}
		\geq
		\left(
		\frac{1}{|V(\mathcal{G})|}
		\left<
			\delta_u, J_T^\ell \delta_u
		\right>
		\right)^{1/\ell}.
	\end{equation*}
And since $\left< \delta_u, J_T^\ell \delta_u \right> ^{1/\ell}\rightarrow \sup \sigma(J_T)$ as $\ell$ goes to infinity, for any $u$, we are done.
\end{proof}


\end{document}